\let\oldsqrt\sqrt
\def\sqrt{\mathpalette\DHLhksqrt}
\def\DHLhksqrt#1#2{%
\setbox0=\hbox{$#1\oldsqrt{#2\,}$}\dimen0=\ht0
\advance\dimen0-0.2\ht0
\setbox2=\hbox{\vrule height\ht0 depth -\dimen0}%
{\box0\lower0.4pt\box2}}
\newcommand{\R}{\mathbb{R}} 
\newcommand{\N}{\mathbb{N}} 
\newcommand{\dist}{\textnormal{dist}} 
\newcommand{\diam}{\textnormal{diam}} 
\newcommand{\supp}{\textnormal{supp}} 
\DeclareMathOperator{\essinf}{essinf} 
\renewcommand{\phi}{\varphi}
\newcommand{\cC}{{\mathcal C}}
\newcommand{\cD}{{\mathcal D}}
\newcommand{\cH}{{\mathcal H}}
\newcommand{\cJ}{{\mathcal J}}
\newcommand{\eps}{\varepsilon}
\theoremstyle{definition}
\newtheorem{defi}{Definition}[section]
\newtheorem{bem}[defi]{Remark}
\theoremstyle{plain} 
\newtheorem{satz}[defi]{Theorem}
\newtheorem{prop}[defi]{Proposition}
\newtheorem{lemma}[defi]{Lemma}
\newtheorem{cor}[defi]{Corollary}
\theoremstyle{definition}
\numberwithin{equation}{section}
\title{Symmetry via antisymmetric maximum principles in nonlocal problems of variable order}
\author{Sven Jarohs\footnote{Goethe-Universit\"at, Frankfurt, jarohs@math.uni-frankfurt.de.}
\hspace{1ex}and Tobias Weth\footnote{Goethe-Universit\"at, Frankfurt, weth@math.uni-frankfurt.de.}
}
\date{\today}
\begin{document}
\maketitle
\begin{abstract}
We consider the nonlinear problem 
\[
 (P)\qquad\left\{\begin{aligned}
  I u&=f(x,u)&& \text{ in $\Omega$,}\\
u&=0&&\text{ on $\R^{N}\setminus\Omega$}\\
 \end{aligned}\right.
\]
in an open bounded set $\Omega\subset\R^{N}$, where $I$ is a nonlocal operator which may be anisotropic and may have varying order. We assume mild symmetry and monotonicity assumptions on $I$, $\Omega$ and the nonlinearity $f$ with respect to a fixed direction, say $x_1$, and we show that any nonnegative weak solution $u$ of $(P)$ is symmetric in $x_1$. Moreover, we have the following alternative: Either $u\equiv 0$ in $\Omega$, or $u$ is strictly decreasing in $|x_1|$. The proof relies on new maximum principles for antisymmetric supersolutions of an associated class of linear problems.
\end{abstract}

{\footnotesize
\begin{center}
\textit{Keywords.} Nonlocal Operators $\cdot$ Maximum Principles $\cdot$ Symmetries
\end{center}
\begin{center}
\end{center}
}
\section{Introduction}
In this work we study the following class of nonlocal and semilinear Dirichlet problems in a bounded open set $\Omega \subset \R^N$:  
\[
 (P)\qquad\left\{\begin{aligned}
  I u&=f(x,u)&& \text{ in $\Omega$;}\\
u&=0&&\text{ on $\R^{N}\setminus\Omega$.}
 \end{aligned}\right.
\]
Here the nonlinearity $f:\Omega\times \R\to\R$ is a measurable function with properties to be specified later, and $I$ is a nonlocal linear operator. Due to various applications in physics, biology and finance with anomalous diffusion phenomena,  nonlocal problems have gained enormous attention recently. In particular, problem $(P)$ has been studied with $I=(-\Delta)^{\frac{\alpha}{2}}$, the fractional Laplacian of order $\alpha \in (0,2)$. In this case, special properties of the fractional Laplacian have been used extensively to study existence, regularity and symmetry of solutions to $(P)$. In particular, some approaches rely on available Green function representions 
associated with $(-\Delta)^{\frac{\alpha}{2}}$, (see e.g. \cite{BB00,BLW05,CFY13,Chen_Li_Ou,FW13-2,Chen_Song}), whereas other techniques are based on a representation of $(-\Delta)^{\frac{\alpha}{2}}$ as a Dirichlet-to-Neumann map (see e.g \cite{CS07,CS,FLS13}).  These useful features of the fractional Laplacian are closely linked to its isotropy and its scaling laws. However, in the modeling of anisotropic diffusion phenomena and of processes which do not exhibit similar properties, it is necessary to study more general nonlocal operators $I$.  In this spirit, general classes of nonlocal operators have been considered e.g. in \cite{FK12,FKV13, SV}.\\
 In the present work  we consider $(P)$ for a class of nonlocal operators $I$ which includes the fractional Laplacian but also more general operators which may be anisotropic and may have varying order. More precisely,  the class of operators $I$ in $(P)$ is related to nonnegative nonlocal bilinear forms of the type 
 \begin{equation}
   \label{eq:def-cJ}
\cJ(u,v)=\frac{1}{2}\int_{\R^N}\int_{\R^{N}}(u(x)-u(y))(v(x)-v(y))J(x-y)\ dxdy
 \end{equation}
with a measurable function $J:\R^N  \setminus \{0\} \to[0,\infty)$.  We assume that $J$ is even, i.e, $J(-z)=J(z)$ for $z \in \R^{N} \setminus \{0\}$. Moreover, we assume the following integral condition:
$$
(J1) \qquad \quad \quad
\int_{\R^{N}\setminus B_{1}(0)} J(z)\ dz + \int_{B_{1}(0)} |z|^2 J(z)\ dz   < \infty \qquad \quad 
\text{and} \qquad \quad \int_{\R^{N}}J(z)\ dz = \infty. \qquad\quad
$$
By similar arguments as in the recent paper \cite{FKV13}, we shall see in Section~\ref{setup} below that this assumption ensures that $\cJ$ is closed and symmetric quadratic form in $L^2(\Omega)$ with a dense domain given by  
\begin{equation}
  \label{eq:def-cD}
\cD(\Omega):=\{\text{$u:\R^{N}\to\R$ measurable}\;:\; \cJ(u,u)<\infty \text{ and $u\equiv 0$ on $\R^{N}\setminus \Omega$}\}
\end{equation}
Here and in the following, we identify $L^2(\Omega)$ with the space of functions $u \in L^2(\R^N)$ with $u \equiv 0$ on $\R^N \setminus \Omega$. Consequently, $\cJ$ is the quadratic form of a unique self-adjoint operator $I$ on $L^2(\Omega)$, which also satisfies 
$$
[I u](x)= \lim_{\eps \to 0} \int_{|y-x|\ge \eps} [u(x)-u(y)]J(x-y)\,dy \qquad \text{for $u \in \cC^2_c(\Omega)$, $x \in \R^N$}
$$
see Corollary~\ref{3-dense} below. One may study solutions $u$ of $(P)$ in strong sense, requiring that $u$ is contained in the domain of the operator $I$. However, it is more natural to consider the weaker notion of solutions given by the quadratic form $\cJ$ itself. More precisely, we call a function $u\in \cD(\Omega)$ {\em a solution of $(P)$} if the integral $\int_{\Omega}f(x,u(x))\varphi(x)\ dx$ exists for all $\varphi\in \cD(\Omega)$ and 
\[
\cJ(u,\varphi)=\int_{\Omega}f(x,u(x))\varphi(x)\ dx \qquad \text{for all $\varphi\in \cD(\Omega)$,}
\]
We note that the fractional Laplacian $I:= (-\Delta)^{\alpha/2}$ corresponds to the kernel $J(z)= c_{N,\alpha} |z|^{-N-\alpha}$ with $c_{N,\alpha}=  \alpha(2-\alpha)\pi^{-N/2}2^{\alpha-2}\frac{\Gamma(\frac{N+\alpha}{2})}{\Gamma(2-\frac{\alpha}{2})}$.  Our paper is motivated by recent symmetry results for nonlinear equations involving the fractional Laplacian (see \cite{BLW05,CFY13,Chen_Li_Ou,FW13-2,JW13,sciunzi}). More precisely, we present a general approach, based on maximum principles for antisymmetric functions, to investigate symmetry properties of bounded nonnegative solutions of $(P)$ in bounded Steiner symmetric open sets $\Omega$. We claim that this approach is simpler and more general than the techniques applied in the papers cited above. 
In particular, it also applies to anisotropic operators and operators of variable order. To state our main symmetry result, we first introduce the following geometric assumptions on $J$ and the set $\Omega$.
\begin{enumerate}
\item[$(D)$] $\Omega\subset \R^N$ is an open bounded set which is Steiner symmetric in $x_1$, i.e. for every $x\in \Omega$ and $s \in [-1,1]$ we have $(sx_1,x_2,\dots,x_N) \in \Omega$.  
\item[$(J2)$] The kernel $J$ is strictly monotone in $x_1$, i.e. for all $z' \in\R^{N-1}$, $s,t \in \R$ with $|s| <|t|$ we have $J(s,z') > J(t,z')$.
\end{enumerate}
Note that $(J2)$ in particular implies that $J$ is positive on $\R^N \setminus \{0\}$.  We may now state our main symmetry result.

\begin{satz}\label{sec:goal}
 Let $(J1),(J2)$ and $(D)$ be satisfied, and assume that the nonlinearity $f$ has the following properties. 
\begin{enumerate}
\item[$(F1)$] $f: \Omega\times \R\to \R$, $(x,u)\mapsto f(x,u)$ is a Carath\'eodory function such that for every bounded set $K \subset \R$ there exists $L=L(K)>0$ with 
\[
 \sup \limits_{x\in\Omega} |f(x,u)-f(x,v)|\leq L |u-v|  \quad\text{ for $u,v \in K$.}
\]
 \item[$(F2)$] $f$ is symmetric and monotone in $x_1$, i.e. for every $u \in \R$, $x \in \Omega$ and $s \in
[-1,1]$ we have $f(s x_{1},x_2,\dots,x_N,u) \ge f(x,u)$.
\end{enumerate} 
Then every nonnegative solution $u \in L^\infty(\Omega) \cap \cD(\Omega)$ of $(P)$ is symmetric in $x_1$.
Moreover, either $u \equiv 0$ in $\R^{N}$, or $u$ is strictly decreasing in $|x_1|$ and therefore 
satisfies 
\begin{equation}
  \label{eq:positivity-thm-1-1}
\underset{K}{\essinf}\, u>0\qquad \text{for every compact set $K \subset \Omega$.}
\end{equation}
\end{satz}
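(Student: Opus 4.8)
The plan is to use the moving plane method adapted to the nonlocal setting, where reflections play the role of the classical Kelvin-type transformations. For $\lambda \in \R$ write $T_\lambda = \{x \in \R^N : x_1 = \lambda\}$ for the hyperplane, let $\sigma_\lambda(x) = (2\lambda - x_1, x_2, \dots, x_N)$ be the reflection at $T_\lambda$, and set $\Omega_\lambda = \{x \in \Omega : x_1 > \lambda\}$. For a solution $u$ of $(P)$ introduce the reflected function $u_\lambda := u \circ \sigma_\lambda$ and the difference $w_\lambda := u_\lambda - u$, which is antisymmetric with respect to $T_\lambda$ in $\Omega_\lambda$ in the appropriate sense. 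Using $(J2)$ (monotonicity of $J$ in $x_1$) together with $(F2)$ (monotonicity of $f$ in $x_1$) and the local Lipschitz bound $(F1)$, one shows that $w_\lambda$ is an antisymmetric supersolution of a linear nonlocal problem of the type covered by the antisymmetric maximum principles announced in the abstract: schematically, $\cL_\lambda w_\lambda \ge c_\lambda(x) w_\lambda$ in $\Omega_\lambda$ with $c_\lambda \in L^\infty$ and $w_\lambda \ge 0$ on the reflected complement. The key analytic inputs here are (i) a \emph{small-volume} antisymmetric maximum principle, giving $w_\lambda \ge 0$ in $\Omega_\lambda$ once $|\Omega_\lambda|$ is small enough (for $\lambda$ close to $\sup_{x \in \Omega} x_1$), and (ii) a \emph{strong} antisymmetric maximum principle, giving that $w_\lambda \equiv 0$ or $w_\lambda > 0$ in $\Omega_\lambda$.

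With these tools in hand I would run the standard continuation argument. Define $\lambda_0 = \inf\{\lambda > 0 : w_\mu \ge 0 \text{ in } \Omega_\mu \text{ for all } \mu \in [\lambda, \sup_\Omega x_1)\}$. Step one: the small-volume principle shows the set of admissible $\lambda$ is nonempty, so $\lambda_0$ is well defined and $\lambda_0 < \sup_\Omega x_1$. Step two: by continuity of $\lambda \mapsto w_\lambda$ (in $\cD$ or $L^2$, using that translations/reflections act continuously) one gets $w_{\lambda_0} \ge 0$ in $\Omega_{\lambda_0}$. Step three: the crucial claim is $\lambda_0 = 0$. Suppose $\lambda_0 > 0$. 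Since $u \ge 0$ is not identically zero (the case $u \equiv 0$ being explicitly allowed in the alternative), $w_{\lambda_0} \not\equiv 0$, so the strong maximum principle yields $w_{\lambda_0} > 0$ in $\Omega_{\lambda_0}$. Then a compactness plus small-volume argument — split $\Omega_{\lambda_0}$ into a compact core where $w_{\lambda_0}$ is bounded below and a thin remainder of small measure, and perturb $\lambda$ slightly below $\lambda_0$ — shows $w_\lambda \ge 0$ in $\Omega_\lambda$ for $\lambda$ slightly less than $\lambda_0$, contradicting the definition of $\lambda_0$. Hence $\lambda_0 = 0$, i.e. $u_0 \ge u$ on $\Omega_0 = \{x_1 > 0\}$, which by the analogous argument from the left ($x_1 < 0$) gives the reverse inequality, so $u$ is symmetric in $x_1$.

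For the dichotomy and strict monotonicity: once symmetry is established, apply the strong antisymmetric maximum principle at the plane $T_0$ to conclude that either $w_0 \equiv 0$ (which combined with symmetry forces nothing new) — rather, one reruns the moving plane argument to get that for every $\lambda \in (0, \sup_\Omega x_1)$ either $w_\lambda \equiv 0$ on $\Omega_\lambda$ or $w_\lambda > 0$ on $\Omega_\lambda$. If $w_\lambda \equiv 0$ for some $\lambda > 0$, then $u$ is symmetric about $T_\lambda$ as well; combining this with symmetry about $T_0$ and the fact that $\Omega$ is bounded, a standard reflection/periodicity argument forces $u \equiv 0$. Otherwise $w_\lambda > 0$ on $\Omega_\lambda$ for all $\lambda \in (0, \sup_\Omega x_1)$, which is exactly the statement that $u$ is strictly decreasing in $x_1$ on $\{x_1 > 0\} \cap \Omega$, hence strictly decreasing in $|x_1|$ by symmetry. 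The positivity statement \eqref{eq:positivity-thm-1-1} then follows: on a compact $K \subset \Omega$, the essential infimum of $u$ is attained (in the ess-inf sense) at points with maximal $|x_1|$ within $K$, and strict monotonicity together with the strong maximum principle applied directly to $u$ (which is a supersolution of a linear equation with bounded zeroth-order term via $(F1)$) rules out $u$ vanishing on a positive-measure subset of $\Omega$ unless $u \equiv 0$.

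The main obstacle I expect is Step three — propagating the inequality $w_{\lambda_0} \ge 0$ strictly inside and then across a small perturbation of $\lambda_0$ — because in the nonlocal variable-order setting one cannot localize: the bilinear form $\cJ(w_\lambda, \varphi)$ feels the values of $w_\lambda$ on all of $\R^N$, so the "good" sign of $w_\lambda$ on $\Omega_\lambda$ interacts with its reflected values through the antisymmetry, and controlling the bad contribution from the thin remainder region requires the quantitative small-volume maximum principle to be robust under the reflection coupling. Making the continuity $\lambda \mapsto w_\lambda$ precise in the right topology, and ensuring the zeroth-order coefficient $c_\lambda$ stays uniformly bounded as $\lambda$ varies (using $(F1)$ and the $L^\infty$ bound on $u$), are the technical points that need care but should go through.
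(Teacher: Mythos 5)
Your plan follows essentially the same moving-plane strategy as the paper: form $v_\lambda = u\circ Q_\lambda - u$, observe it is an antisymmetric supersolution of a linear problem with a uniformly bounded coefficient thanks to $(F1)$, start the plane near $\sup_\Omega x_1$ using a small-volume maximum principle, propagate it to $\lambda=0$ using a strong antisymmetric maximum principle, and infer symmetry. The architecture is right, and your identification of the two needed maximum principles matches what the paper proves (Proposition~\ref{4-elliptic-max1} and Proposition~\ref{hopf-simple2}). However, there is a genuine gap in your continuation step, and your argument for \eqref{eq:positivity-thm-1-1} takes a different route that is not actually justified by what you write.

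The gap: in your Step three you write \emph{``split $\Omega_{\lambda_0}$ into a compact core where $w_{\lambda_0}$ is bounded below and a thin remainder of small measure, and perturb $\lambda$ slightly below $\lambda_0$.''} This would be straightforward if $u$ were continuous, but $u$ is only in $L^\infty\cap\cD(\Omega)$. The strong maximum principle gives $\essinf_K v_{\lambda_0}>0$ on compact $K\subset\Omega_{\lambda_0}$, but there is no reason for $v_\mu$ to stay bounded below on $K$ as $\mu$ moves away from $\lambda_0$: $v_\mu(x)=u(Q_\mu(x))-u(x)$ samples $u$ at shifted points, and a merely measurable $u$ can oscillate wildly under small shifts. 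The paper's fix is to invoke Lusin's theorem: choose a compact $K\subset\Omega$ with $|\Omega\setminus K|$ small on which $u|_K$ \emph{is} continuous, then work with $K_\mu=\overline{\Omega}_{\mu+\delta_0}\cap K\cap Q_\mu(K)$. Only on such a set does continuity of $\mu\mapsto v_\mu|_{K_\mu}$ hold, allowing the ``thin remainder'' $U_\mu=\Omega_\mu\setminus K_\mu$ to have small measure uniformly in $\mu$. Without this device, or an equivalent substitute, the perturbation argument does not close. Relatedly, in Step three you assert $w_{\lambda_0}\not\equiv 0$ ``since $u\not\equiv 0$''; that inference is not immediate (symmetry of $u$ about $T_{\lambda_0}$ does not by itself contradict $u\not\equiv 0$). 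You supply the right supporting argument — two parallel symmetry hyperplanes for a compactly supported $u$ force $u\equiv 0$ — but only in the later dichotomy discussion; it is needed already here.

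On the positivity \eqref{eq:positivity-thm-1-1}: you invoke ``the strong maximum principle applied directly to $u$'' as a nonnegative supersolution of a linear equation, but the paper never proves such a principle, and indeed points out that the analogous statement is false for the local Dirichlet problem; the long-range kernel is what rescues it, but that is a separate result that your proposal neither proves nor cites. More importantly, ruling out that $u$ vanishes on a positive-measure set is weaker than the quantitative $\essinf_K u>0$ that \eqref{eq:positivity-thm-1-1} demands, and your phrase ``the essential infimum is attained at points with maximal $|x_1|$'' is informal and does not supply the missing quantitative estimate. The paper's derivation is more economical and avoids any direct MP for $u$: given a compact $K\subset\Omega$, symmetrize $K$, take $K'=K\cap\{x_1\le 0\}$, and note that for small $\lambda>0$, $Q_\lambda(K')$ is a compact subset of $\Omega_\lambda$; then $\essinf_{K}u=\essinf_{K'}u\ge \essinf_{Q_\lambda(K')}v_\lambda>0$ by the already-established property $(S_\lambda)$ and nonnegativity of $u$. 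This extracts \eqref{eq:positivity-thm-1-1} entirely from the antisymmetric machinery and is a cleaner route you should adopt.
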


\noindent Here and in the following, if $\Omega$ satisfies $(D)$ and $u: \Omega \to \R$ is measurable, we say that $u$ is\\[0.1cm]
$\bullet$  {\em symmetric in $x_1$} if $u(-x_{1},x')=u(x_{1},x')$ for almost every $x= (x_{1},x')\in \Omega$.\\[0.1cm]
$\bullet$  {\em strictly decreasing in $|x_1|$} if for every $\lambda \in \R \setminus \{0\}$ and every compact set $K \subset \{x \in \Omega\::\:  \frac{x_1}{\lambda} > 1\}$ we have 
$$
\underset{x \in K}{\essinf}\, \bigl[u(2\lambda-x_1, x_2,\dots,x_N) -u(x)\bigr] >0.
$$

\begin{bem}
\label{sec:ex-op}  
We wish to single out a particular class of operators satisfying $(J1)$ and $(J2)$. Let $\alpha,\beta \in (0,2)$, $c>0$ and consider a measurable map $k:(0,\infty)\to(0,\infty)$ such that
\[
\frac{\rho^{-N}}{c}  \le k(\rho)\le c\rho^{-N-\alpha}\quad \text{for $\rho \le 1\qquad$ and}\qquad 
k(\rho) \le c\rho^{-N-\beta}\quad\text{for $\rho >1$.}\\   
  \]
Suppose moreover that $k$ is strictly decreasing on $(0,\infty)$, and let 
$|\cdot|_\sharp$ denote a norm on $\R^N$ with the property that $|(s,z')|_\sharp<|(t,z')|_\sharp$ for every $s,t \in \R$ with $|s|<|t|$ and $z' \in \R^{N-1}$. Then the kernel 
$$
J: \R^N \setminus \{0\} \to \R,\qquad  J(z)= k(|z|_\sharp)
$$
satisfies $(J1)$ and $(J2)$. As remarked before, the case where $|\cdot|_{\sharp}=|\cdot|$ is the euclidean norm on $\R^N$ and $k(\rho)=  c_{N,\alpha} \rho^{-N-\alpha}$ corresponds to the fractional Laplacian $I=(-\Delta)^{\alpha/2}$. The class defined here also includes operators of order varying between $0$ and $\alpha \in (0,2)$. In particular, zero order operators are admissible. Moreover, the choice of non-euclidean norms $|\cdot|_{\sharp}$ leads to anisotropic operators. In particular, for $1 \le p < \infty$, the norm 
\begin{equation}
  \label{eq:norm-anisotropic}
|x|_\sharp  = |x|_p:= \bigl(\sum_{i=1}^N |x_i|^p\bigr)^{1/p} \qquad \text{for $x \in \R^N$}
\end{equation}
has the required properties. 
\end{bem}

As a direct consequence of Theorem \ref{sec:goal} we have the following. Here $e_j \in \R^N$ denotes the $j$-th coordinate vector for $j=1,\dots,N$.
\begin{cor}\label{cor-goal}
 Let $J(z)=k(|z|_p)$, where $k$ is as in Remark~\ref{sec:ex-op}, $1 \le p< \infty$ and $|\cdot|_p$ is given in (\ref{eq:norm-anisotropic}).\\[0.1cm] 
(i) Let $\Omega\subset\R^{N}$ be Steiner symmetric in $x_1,\dots,x_N$ , i.e., for every $x\in \Omega$, $j=1,\dots,N$ and $s \in [0,2]$ we have $x-  s x_j e_j \in \Omega$. Moreover, let $f$ fulfill $(F1)$ and be symmetric and monotone in $x_1,\dots,x_N$, i.e. for every $u \in \R$, $x \in \Omega$, $j=1,\dots,N$ and $s \in
[0,2]$ we have $f(x-s x_j e_j ,u) \ge f(x,u)$. Then every nonnegative solution $u\in L^\infty(\Omega) \cap \cD(\Omega)$ of $(P)$ is symmetric in $x_1,\dots,x_N$. Moreover, 
either $u \equiv 0$ in $\R^{N}$, or $u$ is strictly decreasing in $|x_1|,\dots,|x_N|$ and therefore satisfies (\ref{eq:positivity-thm-1-1}).\\[0.1cm]  
(ii) If $p=2$, $\Omega\subset\R^{N}$ is a ball centered in $0$ and $f$ fulfills $(F1)$, $(F2)$ and is radial in $x$ i.e. $f(x,u)=f(|x|e_1,u)$ for $x\in \Omega$, then every nonnegative solution $u\in L^\infty(\Omega) \cap \cD(\Omega)$ of $(P)$ is radially symmetric. 
Moreover, either $u \equiv 0$ in $\R^{N}$, or $u$ is strictly decreasing in $|x|$ and therefore satisfies~(\ref{eq:positivity-thm-1-1}).
\end{cor}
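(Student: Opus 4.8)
The plan is to obtain both parts of Corollary~\ref{cor-goal} from Theorem~\ref{sec:goal} by applying it repeatedly along suitable directions and checking that the hypotheses of the theorem are invariant under the relevant changes of coordinates.

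For part (i), fix $j\in\{1,\dots,N\}$ and apply the orthogonal coordinate change that interchanges $x_1$ and $x_j$. I first check that all hypotheses of Theorem~\ref{sec:goal} hold in the new first coordinate: the set $\Omega$, being Steiner symmetric in $x_j$, satisfies $(D)$; the kernel $J(z)=k(|z|_p)$ satisfies $(J1)$ and $(J2)$ because $k$ is strictly decreasing and, for $1\le p<\infty$, the norm $|\cdot|_p$ is permutation invariant and strictly increasing in each $|z_i|$; and the nonlinearity satisfies $(F1)$ and $(F2)$, where for $(F2)$ one uses that the substitution $s\mapsto 1-s$ carries $[0,2]$ onto $[-1,1]$, so that the hypothesis $f(x-sx_je_j,u)\ge f(x,u)$ for $s\in[0,2]$ is exactly the monotonicity required by $(F2)$ in the $x_j$-direction. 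Theorem~\ref{sec:goal} then yields that $u$ is symmetric in $x_j$ and that either $u\equiv0$ in $\R^N$ or $u$ is strictly decreasing in $|x_j|$. Since the vanishing alternative does not depend on $j$, one concludes that either $u\equiv0$, or $u$ is simultaneously symmetric and strictly decreasing in each of $|x_1|,\dots,|x_N|$; in the latter case the strict monotonicity in $|x_1|$ already gives (\ref{eq:positivity-thm-1-1}) by Theorem~\ref{sec:goal}.

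For part (ii) I proceed analogously but with arbitrary directions. Since $p=2$, the norm $|\cdot|_2=|\cdot|$ is euclidean, so $J(z)=k(|z|)$ is rotation invariant and the whole problem $(P)$ is invariant under the orthogonal group (change of variables in $\cJ$ and in the right-hand side). Given $\nu\in\R^N$ with $|\nu|=1$, I pick $R\in O(N)$ with $Re_1=\nu$ and pass to the rotated coordinates; there a ball centered at $0$ still satisfies $(D)$, the kernel still satisfies $(J1)$ and $(J2)$, and $(F1)$ is preserved. For $(F2)$ I use that $f$ is radial: writing $f(x,u)=\psi(|x|,u)$, the hypothesis $(F2)$ with $x'=0$ shows that $\psi(\cdot,u)$ is nonincreasing, which is exactly what is needed for $(F2)$ in the rotated first coordinate. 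Hence Theorem~\ref{sec:goal} applies and gives that $u$ is symmetric with respect to the hyperplane $\nu^\perp$, and that either $u\equiv0$ or $u$ is strictly monotone in the $\nu$-direction. If $u\not\equiv0$, then $u$ is symmetric with respect to $\nu^\perp$ for every unit vector $\nu$; since every element of $O(N)$ is a finite product of such reflections and each reflection preserves Lebesgue measure, this forces $u\circ T=u$ almost everywhere for all $T\in O(N)$, i.e. $u$ is radially symmetric. Radial symmetry together with strict monotonicity in every direction $\nu$ then yields that the radial profile of $u$ is strictly decreasing, and (\ref{eq:positivity-thm-1-1}) follows as before.

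The step I expect to require the most care is the passage, in part (ii), from ``symmetric with respect to $\nu^\perp$ for every $\nu$'' to ``radially symmetric'': the exceptional null set produced by Theorem~\ref{sec:goal} depends on $\nu$, so one must argue that composing finitely many reflection identities --- legitimate because, by the Cartan--Dieudonn\'e theorem, every orthogonal map is a product of at most $N$ hyperplane reflections --- keeps us within almost-everywhere statements, and then deduce genuine radial symmetry by a Fubini argument in polar coordinates. The remaining verifications are routine bookkeeping: matching the Steiner-symmetry and monotonicity conditions of the corollary with $(D)$ and $(F2)$ via $s\leftrightarrow 1-s$, and checking the directional monotonicity of $k(|\cdot|_p)$ and of the radial profile of $f$.
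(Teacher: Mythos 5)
The paper states this corollary as a ``direct consequence'' of Theorem~\ref{sec:goal} and offers no written proof, so there is nothing to compare against beyond the implicit intended argument; your proposal is correct and is that natural argument. In part (i) the reindexing $s\mapsto 1-s$ matches the corollary's hypotheses with $(D)$ and $(F2)$ exactly as you say, permutation-invariance and strict monotonicity of $|\cdot|_p$ in each coordinate give $(J2)$ in every coordinate direction, and the alternative ``$u\equiv0$'' is $j$-independent so the conclusions combine. In part (ii) you correctly identify the only delicate point: each application of the theorem gives $u=u\circ R_\nu$ only a.e., with a $\nu$-dependent null set, and your resolution (composition of a.e.\ identities together with a Fubini argument to pass from ``$u=u\circ T$ a.e.\ for each $T$'' to equality with the rotational average, hence to a radial representative, followed by Cartan--Dieudonn\'e) is sound; the strict decrease of the radial profile then follows from the directional strict monotonicity provided by the theorem. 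Everything checks out.
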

In the special case where $I=(-\Delta)^{\frac{\alpha}{2}}$, $\alpha\in(0,2)$, Theorem~\ref{sec:goal} has been obtained by the authors in \cite[Corollary 1.2]{JW13} as a corollary of result on asymptotic symmetry for the corresponding parabolic problem. While some of the parabolic estimates in \cite{JW13} are not available for the class of nonlocal operators considered here, we will be able to formulate elliptic counterparts of some of the tools from \cite{JW13} in the present setting.  Independently from our work \cite{JW13}, a weaker variant of Theorem~\ref{sec:goal} in the special case $I=(-\Delta)^{\frac{\alpha}{2}}$, restricted to strictly positive solutions, is proved  in the very recent preprint \cite[Theorem 1.2]{sciunzi}, where also related problems for the fractional Laplacian with singular local linear terms are considered.  Corollary \ref{cor-goal}(ii) for $I=(-\Delta)^{\frac{\alpha}{2}}$, $\alpha\in(0,2)$ has been proved first by Birkner, L\'opez-Mimbela and Wakolbinger \cite{BLW05} for $I=(-\Delta)^{\frac{\alpha}{2}}$ and a nonlinearity $f=f(u)$ which is nonnegative and increasing. In the very recent papers \cite{CFY13,FW13-2},  Corollary \ref{cor-goal}(ii) is proved for strictly positive solutions in the case $I=(-\Delta)^{\frac{\alpha}{2}}$ under different assumptions on $f$. The proofs in these papers rely on the explicit form of the Green function associated with $(-\Delta)^{\frac{\alpha}{2}}$ in balls.\\
In order to explain the difference between considering nonnegative or positive solutions,  we point out that the conclusion~(\ref{eq:positivity-thm-1-1}) can be seen as  a strong maximum principle for bounded solutions of $(P)$ in open sets satisfying $(D)$ which is {\em not} true for the corresponding Dirichlet problem 
\begin{equation}
  \label{class}
 \left\{\begin{aligned}
 -\Delta u&=f(x,u)&& \text{ in $\Omega$;}\\
u&=0&&\text{ on $\partial \Omega$.}
 \end{aligned}\right.
 \end{equation}
Note that we do not assume $\Omega$ to be connected in Theorem~\ref{sec:goal}, but even in domains $\Omega \subset \R^N$ the assumptions $(D)$ and $(F1)$, $(F2)$ do not guarantee that nonnegative solutions of (\ref{class}) are either strictly positive or identically zero in $\Omega$, see e.g. \cite{PT12} for examples for nonnegative solutions of (\ref{class}) with interior zeros. The positivity property (\ref{eq:positivity-thm-1-1}) can be seen as a consequence of the long range nonlocal interaction enforced by $(J2)$. Note that $(J2)$ is not satisfied for kernels of the form 
\begin{equation}
  \label{eq:cut-off-fractional}  
z \mapsto J(z)= 1_{B_r(0)}|z|^{-N-\alpha}\qquad \text{with $\alpha \in (0,2)$, $r>0$.}
\end{equation}
It is therefore natural to ask whether a result similar to Theorem~\ref{sec:goal} also holds for kernels of the type (\ref{eq:cut-off-fractional}) which vanish outside a compact set and therefore model short range nonlocal interaction. Related to this case, we have to following result for {\em a.e. positive solutions of $(P)$ in $\Omega$.}

\begin{satz}
\label{sec:vari-symm-result-1}
 Let $\Omega\subset\R^{N}$ satisfy $(D)$, and let the even kernel $J: \R^N \setminus \{0\} \to [0,\infty)$ satisfy $(J1)$ and 
\begin{enumerate}
\item[$(J2)'$] For all $z' \in\R^{N-1}$, $s,t \in \R$ with $|s| \le |t|$ we have $J(s,z') \ge J(t,z')$.
Moreover, there is $r_0>0$ such that 
$$J(s,z') > J(t,z') \qquad \text{for all $z' \in\R^{N-1}$ and $s,t \in \R$, with $|z'| \le r_0$ and $|s| <|t|\le r_0$.}   
$$
\end{enumerate}
Furthermore, suppose that the nonlinearity satisfies $(F1)$ and $(F2)$. Then every a.e. positive solution $u\in L^\infty(\Omega) \cap \cD(\Omega)$ of $(P)$ is symmetric in $x_1$ and strictly decreasing in $|x_1|$ on $\Omega$. Consequently, it satisfies~(\ref{eq:positivity-thm-1-1}).
\end{satz}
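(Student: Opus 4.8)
The plan is to mimic the method of moving planes, as in the proof of Theorem~\ref{sec:goal}, but to exploit the weaker condition $(J2)'$ more carefully. For $\lambda \in \R$ write $H_\lambda=\{x\in\R^N:x_1>\lambda\}$, let $\sigma_\lambda(x)=(2\lambda-x_1,x_2,\dots,x_N)$ be the reflection at the hyperplane $\{x_1=\lambda\}$, and set $u_\lambda=u\circ\sigma_\lambda$ and $w_\lambda=u_\lambda-u$. By the symmetry assumptions $(D)$, $(J2)'$ and $(F1)$, $(F2)$, and arguing exactly as in the case of Theorem~\ref{sec:goal}, for each $\lambda\ge 0$ the antisymmetric function $w_\lambda$ is a (weak) supersolution of a linear nonlocal equation of the form $Iw_\lambda - c_\lambda(x) w_\lambda \ge 0$ in $\Omega\cap H_\lambda$, where $c_\lambda\in L^\infty$ arises from the Lipschitz bound $(F1)$. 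I would first record the antisymmetric maximum principles established earlier in the paper for such linear problems (a weak/small-volume version and a strong version), since these are the workhorses. Starting the procedure near $\lambda$ close to $\sup_{x\in\Omega}x_1$, the small-volume maximum principle gives $w_\lambda\ge 0$; then define $\lambda_*=\inf\{\lambda\ge 0: w_\mu\ge 0\text{ in }\Omega\cap H_\mu\text{ for all }\mu\ge\lambda\}$ and aim to show $\lambda_*=0$, which yields $w_0\ge 0$, and by the symmetric argument from the other side $w_0\equiv 0$, i.e. symmetry in $x_1$.

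The new feature, compared with Theorem~\ref{sec:goal}, is that $(J2)'$ only provides strict monotonicity of $J$ in a neighborhood of the origin of size $r_0$, so the long-range comparison term that drove the argument under $(J2)$ is no longer available globally. Here the assumption that $u$ is \emph{a.e.\ positive} in $\Omega$ enters crucially: it lets me run a Hopf-type / strong maximum principle argument using only the short-range interaction. Concretely, suppose $\lambda_*>0$. Then $w_{\lambda_*}\ge 0$ in $\Omega\cap H_{\lambda_*}$ by continuity of $\lambda\mapsto w_\lambda$ in the relevant sense, and $w_{\lambda_*}\not\equiv 0$ (otherwise one reaches a contradiction with positivity of $u$ on the reflected part, since $u\equiv 0$ outside $\Omega$ but $u>0$ a.e.\ in $\Omega$ forces $\Omega$ to be symmetric about $\{x_1=\lambda_*\}$, impossible for $\lambda_*>0$ by $(D)$). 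The strong antisymmetric maximum principle — which only needs strict positivity of the kernel near the diagonal, guaranteed by $(J2)'$ for points at mutual distance $\le r_0$ — then gives $\essinf_K w_{\lambda_*}>0$ on compact subsets $K$ of $\Omega\cap H_{\lambda_*}$. Combining this strict positivity on compacta with the $L^\infty$ bound on $u$ and the small-volume maximum principle applied on the thin region $\Omega\cap(H_{\lambda_*-\eps}\setminus H_{\lambda_*})$ (whose measure is $o(1)$ as $\eps\to0$), one obtains $w_{\lambda_*-\eps}\ge 0$ for small $\eps>0$, contradicting the minimality of $\lambda_*$. Hence $\lambda_*=0$.

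Once symmetry is established, monotonicity follows by reading the argument again for $\lambda\in(0,\Lambda)$, $\Lambda=\sup_{x\in\Omega}x_1$: for each such $\lambda$ one has $w_\lambda\ge 0$ and $w_\lambda\not\equiv 0$ in $\Omega\cap H_\lambda$ (again because $\Omega$ is not symmetric about $\{x_1=\lambda\}$ while $u>0$ a.e.\ in $\Omega$), so the strong antisymmetric maximum principle yields $\essinf_K w_\lambda>0$ for every compact $K\subset\{x\in\Omega:x_1/\lambda>1\}$, which is precisely the definition of $u$ being strictly decreasing in $|x_1|$. The positivity statement~(\ref{eq:positivity-thm-1-1}) is then immediate: any compact $K\subset\Omega$ lies, up to the symmetry, in $\{x_1\ge\delta\}$ for some small $\delta>0$ together with a symmetric piece, and strict monotonicity plus the already-known a.e.\ positivity of $u$ give a uniform positive lower bound.

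I expect the main obstacle to be the strong maximum principle step under the restricted hypothesis $(J2)'$: one must verify that the earlier antisymmetric strong maximum principle is applicable using only the strict monotonicity of $J$ on the ball of radius $r_0$, i.e.\ that the positivity of $w_{\lambda}$ on compacta propagates through the purely local (distance $\le r_0$) part of the nonlocal interaction. This requires a covering/chaining argument connecting any compact subset of $\Omega\cap H_\lambda$ to a region where $w_\lambda$ is already known to be positive, using steps of size less than $r_0$, and a careful check that the nonlinear term's sign (controlled by $(F2)$ and the a.e.\ positivity of $u$) does not destroy the supersolution property on these small balls. A secondary technical point is the justification of the limiting/continuity argument in $\lambda$ at $\lambda_*$ within the weak ($\cD(\Omega)$) formulation, which should follow from the $L^\infty$ bound on $u$, dominated convergence, and the closedness of the form $\cJ$.
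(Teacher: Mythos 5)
Your overall strategy coincides with the paper's: moving planes in $x_1$, antisymmetric supersolutions $w_\lambda$, a small--volume (weak) maximum principle to start the plane near $\ell=\sup_\Omega x_1$, and a strong antisymmetric maximum principle adapted to the short-range strictness in $(J2)'$ to run the continuation argument. You also correctly identify that under $(J2)'$ a covering/chaining argument with steps of size $\le r_0$ replaces the global comparison term used under $(J2)$, and that a.e.\ positivity of $u$ must enter in an essential way. So far so good.

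There is, however, a genuine gap in the way you deploy the strong maximum principle. Under $(J2)'$ the analogue of Proposition~\ref{hopf-simple2}, namely Proposition~\ref{hopf-simple2-variant}, is only valid for a \emph{connected} $U$, and its dichotomy is weaker than you assume: either $v\equiv 0$ a.e.\ in a \emph{neighborhood of} $\overline U$, or $\essinf_K v>0$ for all compact $K\subset U$. Your chaining can only propagate positivity inside a single connected component of $\Omega_\lambda$; it cannot jump across components of $\Omega_\lambda$, and $(D)$ does not force $\Omega_\lambda$ to be connected (indeed $\Omega$ itself is not assumed connected). Consequently, your step from ``$w_{\lambda_*}\ge 0$ a.e.\ in $H_{\lambda_*}$ and $w_{\lambda_*}\not\equiv 0$ globally'' to ``$\essinf_K w_{\lambda_*}>0$ for all compacta $K\subset\Omega_{\lambda_*}$'' does not follow: $w_{\lambda_*}$ could vanish identically near one connected component of $\Omega_{\lambda_*}$ while being positive on another, and your global $w_{\lambda_*}\not\equiv 0$ argument (two symmetry hyperplanes, forcing $\Omega=\varnothing$) does not exclude this. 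The paper's proof of Claim~1 closes this gap differently: if $(S_\lambda)$ fails, Proposition~\ref{hopf-simple2-variant} gives a connected component $\Omega'$ of $\Omega_\lambda$ and a neighborhood $N$ of $\overline{\Omega'}$ with $v_\lambda\equiv 0$ on $N$; since $\lambda>0$ and $\Omega$ is Steiner symmetric, the reflected set $\tilde N:=Q_\lambda(N\setminus\Omega)\cap\Omega$ has positive measure, and on $\tilde N$ one has $v_\lambda=u\circ Q_\lambda-u=-u$ because $u\equiv 0$ outside $\Omega$; antisymmetry then forces $u\equiv 0$ on $\tilde N$, contradicting $u>0$ a.e.\ in $\Omega$. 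This componentwise, reflection-based use of the a.e.\ positivity is the missing ingredient; once you have it, Claim~2 and the conclusion that $\lambda_0=0$ go through exactly as in the proof of Theorem~\ref{sec:goal}. A minor secondary point: your justification of~(\ref{eq:positivity-thm-1-1}) via ``$K$ lies, up to the symmetry, in $\{x_1\ge\delta\}$'' is not quite right (compacta can cluster on $\{x_1=0\}$); the correct route is as in the paper, taking $K'=K\cap\{x_1\le 0\}$ and comparing $\essinf_{K'}u$ with $\essinf_{Q_\lambda(K')}v_\lambda$ for $\lambda>0$ small.
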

Note that the kernel class given by (\ref{eq:cut-off-fractional}) satisfies $(J1)$ and $(J2)'$. We recall that Gidas, Ni and Nirenberg \cite{GNN79} proved the corresponding symmetry result for strictly positive solutions of (\ref{class}) under some restrictions on $\Omega$ which were then removed in \cite{BN91}. These results rely on the moving plane method which, in other variants, had already been introduced in \cite{A62,S71}. For nonlocal problems involving the fractional Laplacian, the moving plane method was used in a stochastic framework by Birkner, L\'opez-Mimbela and Wakolbinger in the above-mentioned paper \cite{BLW05}. Chen, Li and Ou \cite{Chen_Li_Ou} used the explicit form of the inverse of the fractional Laplacian to prove symmetry results for $I=(-\Delta)^{\frac{\alpha}{2}}$ and $f(u)=u^{(N+\alpha)/(N-\alpha)}$ in $\R^{N}$. For this they developed a variant of the moving plane method for integral equations. Similar methods were used in the above-mentioned papers \cite{CFY13,FW13-2}.\\
The results on the present paper rely on a different variant of the moving plane method which partly extends recent techniques of \cite{JW13, FJ13,RS13} and, independently, \cite{sciunzi}. More precisely, we show that $(J1)$ and $(J2)$ -- or, alternatively, $(J2)'$ -- are sufficient assumptions for the bilinear form $\cJ$ to provide maximum principles for antisymmetric solutions of associated linear operator inequalities in weak form, see Section~\ref{mp}. Here antisymmetry refers to a reflection at a given hyperplane.  Combining different (weak and strong) versions of these maximum principles, we then develop a framework for the moving plane method for nonnegative solutions of $(P)$ which are not necessarily strictly positive. The approach seems more direct and more flexible than the ones in \cite{CFY13,Chen_Li_Ou,FW13-2} since it does not depend on Green function representations.\\ 
The paper is organized as follows. In Section \ref{setup} we collect useful properties of the nonlocal bilinear forms which we consider. Section \ref{mp} is devoted to classes of linear problems related to $(P)$ and hyperplane reflections. In particular, we prove a small volume type maximum principle and a strong maximum principle for antisymmetric supersolutions of these problems. In Section~\ref{mr} we complete the proof of Theorem~\ref{sec:goal}, and in Section~\ref{sec:vari-symm-result} we complete the proof of Theorem~\ref{sec:vari-symm-result-1}.\\


\textbf{Acknowledgment:} 
Part of this work was done while the
first author was visiting AIMS-Senegal. He would like to thank them for their kind hospitality.

\section{Preliminaries}\label{setup}
We fix some notation. For subsets $D,U \subset \R^N$ we write $\dist(D,U):= \inf\{|x-y|\::\: x \in D,\, y \in U\}$.  If $D= \{x\}$ is a singleton, we write $\dist(x,U)$
in place of $\dist(\{x\},U)$. For $U\subset\R^{N}$ and $r>0$ we consider $B_{r}(U):=\{x\in\R^{N}\;:\; \dist(x,U)<r\}$, and we let, as usual  
 $B_r(x)=B_{r}(\{x\})$ be the open ball in $\R^{N}$ centered at $x \in \R^N$ with radius $r>0$. For any subset $M \subset \R^N$, we denote by $1_M: \R^N \to \R$ the
characteristic function of $M$ and by $\diam(M)$ the diameter of $M$. If $M$ is measurable $|M|$ denotes the Lebesgue measure of $M$. Moreover, if $w: M \to \R$ is a function,  we let $w^+= \max\{w,0\}$ resp. $w^-=-\min\{w,0\}$ denote the positive and negative part of $w$, respectively. 

Throughout the remainder of the paper, we assume that $J:\R^N \setminus \{0\}\to[0,\infty)$ is even and satisfies $(J1)$. We let $\cJ$ be the corresponding quadratic form defined in (\ref{eq:def-cJ}) and, for an open set $\Omega \subset \R$, we consider $\cD(\Omega)$ as defined in (\ref{eq:def-cD}). It follows from $(J1)$ that $J$ is positive on a set of positive measure.  Thus, by   
\cite[Lemma 2.7]{FKV13} we have $\cD(\Omega) \subset L^2(\Omega)$ and 
\begin{equation}
\label{l2-bound}
\Lambda_{1}(\Omega):=\inf_{u\in \cD(\Omega)}\frac{\cJ(u,u)}{\|u\|^2_{L^{2}(\Omega)}} \: >\:0 \qquad \text{for every open bounded set  $\Omega\subset\R^{N}$,}
\end{equation}
which amounts to a Poincar\'e-Friedrichs type inequality. We will need lower bounds for $\Lambda_1(\Omega)$ in the case where $|\Omega|$ is small. For this we set
$$
\Lambda_1(r):= \inf \{ \Lambda_{1}(\Omega)\::\: \text{$\Omega \subset \R^N$ open, $|\Omega|=r$}\} \qquad \text{for $r>0$.}
$$

\begin{lemma}\label{3-mengen-k}
We have $\Lambda_{1}(r) \to \infty$ as $r \to 0$.
\end{lemma}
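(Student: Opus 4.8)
The plan is to exploit assumption $(J1)$ — specifically the crucial requirement $\int_{\R^N} J(z)\,dz = \infty$ — to show that the Rayleigh quotient defining $\Lambda_1(\Omega)$ blows up when $|\Omega|$ is small. First I would reduce to a convenient lower bound for $\cJ(u,u)$. For $u \in \cD(\Omega)$, extended by zero outside $\Omega$, one has
\[
\cJ(u,u) = \frac12 \int_{\R^N}\int_{\R^N} (u(x)-u(y))^2 J(x-y)\,dx\,dy \;\ge\; \int_{\Omega}\int_{\R^N \setminus \Omega} u(x)^2 J(x-y)\,dy\,dx,
\]
since on that part of the domain of integration $u(y)=0$. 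Thus $\cJ(u,u) \ge \int_\Omega u(x)^2 \, g_\Omega(x)\,dx$ where $g_\Omega(x) := \int_{\R^N \setminus \Omega} J(x-y)\,dy = \int_{\R^N \setminus (\Omega - x)} J(z)\,dz$. Consequently $\Lambda_1(\Omega) \ge \essinf_{x \in \Omega} g_\Omega(x)$, and it suffices to show that this infimum tends to $\infty$ (uniformly over all open $\Omega$ with $|\Omega|=r$) as $r \to 0$.

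Next I would estimate $g_\Omega(x)$ from below independently of the location of $x$. Fix $R>0$. For any measurable set $\Omega$ with $|\Omega| = r$ and any $x$, the set $(\Omega - x) \cap B_R(0)$ has measure at most $r$, so
\[
g_\Omega(x) \;=\; \int_{\R^N \setminus (\Omega-x)} J(z)\,dz \;\ge\; \int_{B_R(0) \setminus (\Omega-x)} J(z)\,dz \;\ge\; \int_{B_R(0)} J(z)\,dz \;-\; \sup_{\substack{A \subset B_R(0)\\ |A| \le r}} \int_A J(z)\,dz.
\]
The key point is to control the last supremum. Since $(J1)$ gives $\int_{B_1(0)} |z|^2 J(z)\,dz < \infty$ together with $\int_{\R^N \setminus B_1(0)} J(z)\,dz<\infty$, the kernel $J$ is integrable on the annulus $B_R(0) \setminus B_\delta(0)$ for every $\delta>0$; moreover the divergence $\int_{\R^N} J = \infty$ forces $\int_{B_\delta(0)} J(z)\,dz \to \infty$ as $\delta \to 0$. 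Given $M>0$, first choose $\delta>0$ so small that $\int_{B_\delta(0)} J(z)\,dz > 2M$; then $J \in L^1(B_R(0) \setminus B_\delta(0))$, so by absolute continuity of the integral there is $r>0$ with $\int_A J(z)\,dz < M$ whenever $A \subset B_R(0)\setminus B_\delta(0)$ has $|A| \le r$. For $A \subset B_R(0)$ with $|A|\le r$, split $A = (A \cap B_\delta(0)) \cup (A \setminus B_\delta(0))$: the second piece contributes at most $M$, while the first piece has $\int_{A \cap B_\delta} J \le \int_{B_\delta} J < \infty$ only if $J$ is integrable near $0$, which it need not be. To handle this I instead argue directly: $g_\Omega(x) \ge \int_{B_\delta(0) \setminus (\Omega - x)} J(z)\,dz \ge \int_{B_\delta(0)} J(z)\,dz - \int_{B_\delta(0) \cap (\Omega-x)} J(z)\,dz$, and bound $\int_{B_\delta(0)\cap(\Omega-x)} J(z)\,dz$ — but this is again the problematic term.

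The cleanest route, and the one I would actually carry out, avoids splitting off a neighborhood of the origin. For any $\delta \in (0,R)$ write $g_\Omega(x) \ge \int_{B_R(0)\setminus B_\delta(0)} J \,-\, \int_{(\Omega-x)\cap(B_R(0)\setminus B_\delta(0))} J$; the subtracted integral is over a set of measure $\le r$ inside the region where $J \le J(\delta\text{-sphere})$ is bounded (using monotone-type control, or simply $J \in L^1$ on the annulus), hence $\le \omega(r)$ where $\omega(r) \to 0$ as $r \to 0$ by absolute continuity of $\int_{B_R \setminus B_\delta} J$. So $g_\Omega(x) \ge \int_{B_R(0)\setminus B_\delta(0)} J(z)\,dz - \omega(r)$. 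Now $\int_{B_R(0)\setminus B_\delta(0)} J(z)\,dz \to \int_{\R^N} J(z)\,dz = \infty$ as $R \to \infty$, $\delta \to 0$ (using $(J1)$ and the monotone convergence theorem, noting the integrand is nonnegative and finite on each fixed annulus). Hence, given $M$, first fix $R$ large and $\delta$ small so that $\int_{B_R(0)\setminus B_\delta(0)} J > M+1$, then fix $r_0>0$ so that $\omega(r) < 1$ for $r \le r_0$; this yields $\Lambda_1(r) \ge \essinf_{\Omega, x} g_\Omega(x) \ge M$ for all $r \le r_0$, completing the proof.

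The main obstacle is the uniformity in $\Omega$: since $\Lambda_1(r)$ is an infimum over \emph{all} open sets of measure $r$, regardless of shape or position, the bound on $\int_{(\Omega-x)\cap(\text{annulus})} J$ must depend only on the measure $r$ and not on where $\Omega$ sits relative to the singularity of $J$ at the origin. This is exactly what absolute continuity of the integral of the locally integrable function $J$ on a fixed annulus $B_R(0)\setminus B_\delta(0)$ provides, and it is the reason the argument has to be set up so that the singular region $B_\delta(0)$ is discarded entirely (its contribution to the annular integral is simply not counted) rather than estimated.
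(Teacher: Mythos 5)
Your argument is correct, but it reaches the conclusion by a genuinely different route than the paper. You and the paper share the same initial reduction: dropping the $\Omega\times\Omega$ block of the double integral gives $\Lambda_1(\Omega)\ge\inf_{x\in\Omega}\int_{\R^N\setminus(\Omega-x)}J(z)\,dz$, and the task is to bound the right-hand side below uniformly over all open $\Omega$ with $|\Omega|=r$. Where you diverge is in producing that bound. You fix an annulus $B_R(0)\setminus B_\delta(0)$, on which $J\in L^1$ by $(J1)$, and invoke absolute continuity of the integral: the set $(\Omega-x)\cap(B_R(0)\setminus B_\delta(0))$ has measure at most $r$, so its $J$-mass tends to $0$ as $r\to0$ uniformly in $\Omega$ and $x$, while $\int_{B_R(0)\setminus B_\delta(0)}J$ can be made arbitrarily large because $\int_{\R^N}J=\infty$. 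The paper instead argues via the superlevel sets $J_c=\{J\ge c\}$ and the decreasing rearrangement $d(r)=\sup\{c\ge 0:|J_c|\ge r\}$: since $|J_{d(r)}|\ge r=|\Omega-x|$, deleting $\Omega-x$ can cost no more $J$-mass than deleting $J_{d(r)}$, whence $\int_{\R^N\setminus(\Omega-x)}J\ge\int_{\{J<d(r)\}}J$, an explicit closed-form lower bound that tends to $\infty$ by the second part of $(J1)$. The paper's version is tighter and more self-contained (no auxiliary $R$, $\delta$ to choose), whereas yours is a more elementary $\varepsilon$--$\delta$ argument that requires only absolute continuity of the integral; both are rigorous. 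One small caveat: your parenthetical appeal to ``monotone-type control'' of $J$ on the annulus is both unnecessary and not guaranteed by the hypotheses---$J$ need not be radially monotone under $(J1)$ alone---but the $L^1$-integrability of $J$ on $B_R(0)\setminus B_\delta(0)$, which you also cite, is exactly what absolute continuity requires, so this is only a cosmetic slip.
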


\begin{proof}
Let  
$$
 J_c:= \{ z \in \R^N \setminus \{0\}\::\: J(z) \ge c\} \qquad \text{and}\qquad J^c:= \{ z \in \R^N \setminus \{0\}\::\: J(z) < c\}
$$
for $c \in [0,\infty]$. We also consider the decreasing rearrangement $d:(0,\infty) \to [0,\infty]$ of $J$ given by $d(r)= \sup \{c \ge 0 \::\: |J_c| \ge r \}$. We first note that 
\begin{equation}
 \label{eq:est-dec-rearr-3}
 |J_{d(r)}| \ge r \qquad \text{for every $r>0$} 
\end{equation}
Indeed, this is obvious if $d(r)=0$, since $J_0= \R^N \setminus \{0\}$. If $d(r)>0$, we have $|J_c| \ge r$ for every $c < d(r)$ by definition, whereas $|J_c| < \infty$ for every $c>0$ as a 
consequence of the fact that $J \in L^{1}(\R^{N}\setminus B_1(0))$ by $(J1)$. Consequently, since $J_{d(r)}= \underset{c < d(r)}{\bigcap} J_c$, we have $|J_{d(r)}| =   \inf \limits_{c < d(r)} |J_c| \ge r.$ Next we claim that 
\begin{equation}
  \label{eq:est-dec-rearr}
\Lambda_1(r)  \ge \int_{J^{d(r)}} J(z)\,dz \qquad \text{for $r>0$.}
\end{equation}
Indeed, let $r>0$ and $\Omega \subset \R^N$ be measurable with $|\Omega|=r$. 
For $u\in \cD(\Omega)$ we have
\begin{align}
\cJ(u,u)&=\frac{1}{2}\int_{\R^{N}}\int_{\R^{N}}(u(x)-u(y))^2J(x-y)\ dxdy \nonumber \\
&=\frac{1}{2}\int_{\Omega}\int_{\Omega}(u(x)-u(y))^2 J(x-y)\ dxdy+\int_{\Omega}u^2(x)\int_{\R^{N}\setminus \Omega} J(x-y)\ dy \ dx  \nonumber\\
&\geq \inf_{x\in \Omega}\biggl(\;\int_{\;\R^{N}\setminus \Omega_x} J(y)\ dy\biggr)\|u\|^2_{L^{2}(\Omega)} \label{eq:est-dec-rearr-4}
\end{align}
with $\Omega_x:=x+\Omega$. Let $d:=d(r)$. Since $|J_{d}| \ge r = |\Omega|$ by (\ref{eq:est-dec-rearr-3}), we have $|J_d \setminus \Omega_x| \ge |\Omega_x \setminus J_d|$ and thus, for every $x \in \Omega$, 
\begin{align*}
 \int_{\R^{N}\setminus \Omega_x} J(y)\ dy&=\int_{\R^N \setminus J_d} J(y)\ dy +\int_{J_d\setminus \Omega_x}J(y)\ dy- \int_{\Omega_{x}\setminus J_d}J(y)\ dy\\
&\ge \int_{J^d} J(y)\ dy + \Bigl(|J_d\setminus \Omega_x| - 
|\Omega_{x}\setminus J_d|\Bigr)d  \geq \int_{J^d} J(y)\ dy.
\end{align*}
Combining this with (\ref{eq:est-dec-rearr-4}), we obtain (\ref{eq:est-dec-rearr}), as  claimed. As a consequence of the second property in $(J1)$, the decreasing rearrangement of $J$ satisfies $d(r) \to \infty$ as $r \to 0$ and  
$$
\int_{J^{d(r)}} J(y)\ dy \to \infty \qquad \text{as $r \to 0$.}
$$
Together with (\ref{eq:est-dec-rearr}), this shows the claim.  
\end{proof}

\begin{prop}
\label{complete}
Let $\Omega \subset \R^N$ be open and bounded. Then $\cD(\Omega)$ is a Hilbert space with the scalar product $\cJ$.  
 \end{prop}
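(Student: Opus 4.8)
The plan is to show that $\cD(\Omega)$, equipped with the bilinear form $\cJ$ as scalar product, is complete; positive-definiteness of $\cJ$ on $\cD(\Omega)$ already follows from the Poincar\'e--Friedrichs inequality \eqref{l2-bound}, which shows $\cJ(u,u)=0$ forces $\|u\|_{L^2(\Omega)}=0$ and hence $u\equiv 0$. So the only real content is Cauchy completeness. First I would take a Cauchy sequence $(u_n)$ in $(\cD(\Omega),\cJ)$. By \eqref{l2-bound} we have $\|u_n-u_m\|_{L^2(\Omega)}^2 \le \Lambda_1(\Omega)^{-1}\cJ(u_n-u_m,u_n-u_m)$, so $(u_n)$ is also Cauchy in $L^2(\Omega)$ and converges to some $u\in L^2(\Omega)$; extending by zero, $u\equiv 0$ on $\R^N\setminus\Omega$. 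Passing to a subsequence we may assume $u_n\to u$ pointwise a.e.\ on $\R^N$.

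Next I would identify $\cJ(u_n-u_m,u_n-u_m)$ with the $L^2$-norm of the ``difference function'' $U_n(x,y):=u_n(x)-u_n(y)$ against the measure $d\mu(x,y):=\tfrac12 J(x-y)\,dx\,dy$ on $\R^N\times\R^N$. Thus $(U_n)$ is Cauchy in $L^2(\R^{2N},\mu)$, hence converges there to some $V$. On the other hand, since $u_n\to u$ pointwise a.e., we have $U_n(x,y)\to u(x)-u(y)$ pointwise a.e.\ with respect to Lebesgue measure on $\R^{2N}$, and hence $\mu$-a.e.\ as well (because $\mu$ is absolutely continuous with respect to Lebesgue measure). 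A subsequence of $(U_n)$ converges $\mu$-a.e.\ to $V$, so $V(x,y)=u(x)-u(y)$ for $\mu$-a.e.\ $(x,y)$. In particular $u(x)-u(y)\in L^2(\R^{2N},\mu)$, which means exactly $\cJ(u,u)<\infty$; combined with $u\equiv0$ on $\R^N\setminus\Omega$ this gives $u\in\cD(\Omega)$. Finally, $\cJ(u_n-u,u_n-u)=\|U_n-V\|_{L^2(\mu)}^2\to 0$, so $u_n\to u$ in $\cD(\Omega)$, proving completeness.

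The step requiring the most care is the passage from pointwise a.e.\ convergence $u_n \to u$ (on $\R^N$) to the identification of the $L^2(\mu)$-limit $V$ with $u(x)-u(y)$: one must be sure that pointwise convergence of $u_n$ on a set of full Lebesgue measure in $\R^N$ yields pointwise convergence of $(x,y)\mapsto u_n(x)-u_n(y)$ on a set of full Lebesgue measure in $\R^{2N}$ (by Fubini, the complement is a product-type null set), and then invoke the standard fact that an $L^2(\mu)$-convergent sequence has an a.e.-$\mu$-convergent subsequence, matching the two limits $\mu$-a.e. The absolute continuity of $\mu$ with respect to Lebesgue measure on $\R^{2N}$ — immediate since $d\mu = \tfrac12 J(x-y)\,dx\,dy$ with $J$ locally integrable away from the diagonal and the diagonal being Lebesgue-null — is what makes this bridge work. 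Everything else is routine Hilbert-space bookkeeping.
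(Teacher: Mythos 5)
Your proof is correct, and it takes a slightly different route from the paper. The paper, after extracting a subsequence $u_{n_k}\to u$ a.e.\ as you do, invokes Fatou's lemma twice, applied directly to the integral defining $\cJ$: once to get $\cJ(u,u)\le\liminf_k\cJ(u_{n_k},u_{n_k})<\infty$ (so $u\in\cD(\Omega)$), and once more, applied to $\cJ(u_{n_k}-u,u_{n_k}-u)\le\liminf_j\cJ(u_{n_k}-u_{n_j},u_{n_k}-u_{n_j})$, to read off convergence in $\cD(\Omega)$ from the Cauchy property. You instead encode $\cJ$ as the squared $L^2$-norm of the difference function $U_n(x,y)=u_n(x)-u_n(y)$ with respect to the measure $d\mu=\tfrac12 J(x-y)\,dx\,dy$ on $\R^{2N}$, invoke Riesz--Fischer completeness of $L^2(\mu)$ to get an $L^2(\mu)$-limit $V$, and then identify $V(x,y)=u(x)-u(y)$ by matching $\mu$-a.e.\ limits. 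The two arguments are close cousins — both hinge on extracting an a.e.-convergent subsequence and exploiting the integral form of $\cJ$ — but yours buys a clean norm-convergence statement $\|U_n-V\|_{L^2(\mu)}\to 0$ in one shot (no second Fatou application), at the small cost of setting up the product-measure framework and checking absolute continuity of $\mu$ (which is immediate, as you note, since $\mu$ has a measurable density and the diagonal is Lebesgue-null). The paper's argument stays entirely inside $\R^N$ and is a bit more elementary in its toolkit. Both are complete and valid; one cosmetic remark: the positive-definiteness step you open with is indeed needed for $\cJ$ to be a scalar product and the paper leaves it implicit, so it is good that you spelled it out.
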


 \begin{proof}
We argue similarly as in the proof of \cite[Lemma 2.3]{FKV13}. Let $(u_n)_n\subset\cD(\Omega)$ be a Cauchy sequence. By (\ref{l2-bound}) and the completeness of $L^2(\Omega)$, we have that $u_n\to u\in L^{2}(\Omega)$ for a function $u\in L^{2}(\Omega)$. Hence there exists a subsequence such that $u_{n_k}\to u$ almost everywhere in $\Omega$ as $k \to \infty$. By Fatou's Lemma, we therefore have that 
\[
 \cJ(u,u)\leq \liminf_{k\to\infty}\cJ(u_{n_k},u_{n_k})\leq \sup_{k\in \N}\cJ(u_{n_k},u_{n_k})<\infty,
\]
so that $u \in \cD(\Omega)$. Applying Fatou's Lemma again, we find that 
$$
\cJ(u_{n_k}-u,u_{n_k}-u) \le \liminf_{j \to \infty}  \cJ(u_{n_k}-u_{n_j},u_{n_k}-u_{u_j}) \le \sup_{j \ge k} \cJ(u_{n_k}-u_{n_j},u_{n_k}-u_{u_j}) \qquad \text{for $k \in \N$}.
$$
Since $(u_{n})_n$ is a Cauchy sequence with respect to the scalar product $\cJ$, it thus follows that $\lim \limits_{k \to \infty}u_{n_k} = u$ and therefore also $\lim \limits_{n \to \infty}u_{n} = u$ in $\cD(\Omega)$. This shows the completeness of $\cD(\Omega)$.
 \end{proof}

\begin{prop}\label{3-prel-dense}
(i) We have $\cC^{0,1}_{c}(\R^{N}) \subset \cD(\R^N)$.\\[0.1cm]
(ii) Let $v \in \cC_c^2(\R^N)$. Then the principle value integral 
\begin{equation}
  \label{eq:princ-value}
[Iv](x):= P.V. \int_{\R^N} (v(x)-v(y)) J(x-y)\,dy = \lim_{\eps \to 0} \int_{|x-y|\ge \eps} (v(x)-v(y)) J(x-y)\,dy
\end{equation}
exists for every $x \in \R^N$. Moreover, $I v \in L^\infty(\R^N)$,  and for every bounded open set $\Omega \subset \R^N$ and every $u \in \cD(\Omega)$ we have 
$$
\cJ(u,v)= \int_{\R^N} u(x) [Iv](x)\,dx.
$$
  
\end{prop}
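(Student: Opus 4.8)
The plan is to treat the two parts separately, with part (i) being essentially a direct estimate and part (ii) requiring a careful analysis of the principal value together with an approximation argument to identify $\cJ(u,v)$ with the integral against $Iv$.

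For part (i), let $v \in \cC^{0,1}_c(\R^N)$ with Lipschitz constant $L$ and support in $B_R(0)$. First I would split the double integral defining $\cJ(v,v)$ according to whether $|x-y|<1$ or $|x-y|\ge 1$. On the region $|x-y|<1$ I would use $|v(x)-v(y)| \le L|x-y|$, giving a bound by $L^2 \int_{\R^N}\int_{B_1(0)} |z|^2 J(z)\,dz\,\mathbf{1}_{B_{R+1}(0)}(x)\,dx$, which is finite by the first part of $(J1)$ together with the compact support of $v$ (note $v(x)-v(y)=0$ unless $x$ or $y$ lies in $B_R(0)$, hence unless $x \in B_{R+1}(0)$ in this region). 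On the region $|x-y|\ge 1$ I would use $|v(x)-v(y)| \le 2\|v\|_\infty$ and the fact that the integrand vanishes unless $x \in B_R(0)$ or $y \in B_R(0)$, so this part is bounded by a constant times $\|v\|_\infty^2 |B_R(0)| \int_{\R^N \setminus B_1(0)} J(z)\,dz$, again finite by $(J1)$. Hence $\cJ(v,v)<\infty$ and $v \equiv 0$ outside a bounded set, so $v \in \cD(\R^N)$.

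For part (ii), the existence of the principal value at a fixed $x$ follows because for $v \in \cC^2_c(\R^N)$ a second-order Taylor expansion gives $|v(x)-v(y)+ \nabla v(x)\cdot(y-x)| \le \tfrac12 \|D^2 v\|_\infty |x-y|^2$ for $|x-y|\le 1$; since $J$ is even, the linear term $\int_{\eps \le |x-y|\le 1} \nabla v(x)\cdot(y-x) J(x-y)\,dy$ vanishes by antisymmetry, so the integral over $\eps \le |x-y|\le 1$ converges absolutely as $\eps \to 0$ by the local part of $(J1)$, while the integral over $|x-y|\ge 1$ is absolutely convergent since $v$ is bounded and $J \in L^1(\R^N \setminus B_1(0))$. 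The bound $\|Iv\|_\infty < \infty$ follows by making these estimates uniform in $x$: the Taylor bound is uniform, and for $|x-y|\ge 1$ one uses again that the integrand is supported where $x$ or $y$ lies in the (bounded) support of $v$.

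It remains to prove the identity $\cJ(u,v) = \int_{\R^N} u(x)[Iv](x)\,dx$ for $u \in \cD(\Omega)$; I expect this to be the main obstacle. The natural route is to first establish it for $u \in \cC^{0,1}_c(\R^N)$ (or even $\cC^2_c$) by a symmetrization-of-variables computation: writing $\cJ(u,v)$ as $\tfrac12 \int\int (u(x)-u(y))(v(x)-v(y))J(x-y)$ and splitting off the region $|x-y|\ge\eps$, one swaps $x \leftrightarrow y$ using the evenness of $J$ to rewrite the truncated form as $\int_{\R^N} u(x)\bigl(\int_{|x-y|\ge \eps}(v(x)-v(y))J(x-y)\,dy\bigr)dx$; then one lets $\eps \to 0$, using dominated convergence justified by the uniform Taylor-type bound on the inner truncated integrals (which are bounded uniformly in $\eps$ and $x$, and supported in a fixed bounded set after multiplying by $u$) together with $u \in L^1$. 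For general $u \in \cD(\Omega)$ I would then approximate: since $\Omega$ is bounded, one shows $\cC^{0,1}_c(\Omega)$ — or a suitable space of Lipschitz functions vanishing outside $\Omega$ — is dense in $\cD(\Omega)$ with respect to $\cJ$ (this is the kind of density statement the paper flags via ``Corollary~\ref{3-dense} below''; if it is not yet available one can instead use a truncation-and-mollification argument directly). Taking $u_n \to u$ in $\cD(\Omega)$, the left side $\cJ(u_n,v) \to \cJ(u,v)$ by continuity of the bilinear form, and the right side $\int u_n (Iv) \to \int u(Iv)$ because $u_n \to u$ in $L^2(\Omega)$ by (\ref{l2-bound}), $\Omega$ is bounded, and $Iv \in L^\infty$. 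This yields the claimed identity and completes the proof. $\qed$
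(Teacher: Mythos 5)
Your part (i) is correct and matches the paper up to a cosmetic choice of decomposition: you split according to $|x-y|\lessgtr 1$, while the paper splits according to whether both $x,y$ lie in a fixed ball $B_R(0)$ containing the support; both routes bound the near-diagonal part by the Lipschitz constant and the far part by $\|u\|_\infty$, and finiteness follows from $(J1)$ in either case.

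In part (ii), the existence of the principal value and the bound $Iv\in L^\infty(\R^N)$ are handled the same way as in the paper. You use the explicit Taylor expansion and cancel the gradient term by oddness, whereas the paper works directly with the second-order symmetric difference bound $|2v(x)-v(x+z)-v(x-z)|\le K|z|^2$ for $|z|\le\delta$, which folds the cancellation into a single change of variables $z\mapsto -z$. These are equivalent.

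Where you genuinely deviate from the paper is in the identity $\cJ(u,v)=\int_{\R^N}u\,[Iv]\,dx$, and here you introduce an unnecessary detour. Your symmetrization-plus-dominated-convergence computation is exactly the paper's argument, and it already works for \emph{every} $u\in\cD(\Omega)$: for each fixed $\eps>0$ the truncated double integral is absolutely convergent (by Cauchy--Schwarz against $\cJ(u,u)^{1/2}\cJ(v,v)^{1/2}$, or directly because $\sup_x\int_{|x-y|\ge\eps}|v(x)-v(y)|J(x-y)\,dy<\infty$ and $u\in L^1(\Omega)$ since $\Omega$ is bounded), so the swap $x\leftrightarrow y$ is licit, and the resulting integrand $u(x)\int_{|x-y|\ge\eps}(v(x)-v(y))J(x-y)\,dy$ is dominated by $K'|u(x)|\in L^1(\Omega)$ uniformly in $\eps$. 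No smoothness of $u$ enters anywhere. Restricting first to $u\in\cC^{0,1}_c(\R^N)$ and then invoking density is therefore superfluous, and it also creates a gap you would have to fill: Corollary~\ref{3-dense} asserts that $\cD(\Omega)$ is dense in $L^2(\Omega)$, \emph{not} that Lipschitz (or smooth) functions are dense in $\cD(\Omega)$ with respect to the form norm $\cJ$, and the latter density is nowhere established in the paper. If you insisted on the approximation route you would have to prove that density separately. The cleaner fix is simply to observe that your direct argument applies to arbitrary $u\in\cD(\Omega)$ and drop the approximation step.
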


\begin{proof}
(i) Let $u\in \cC^{0,1}_{c}(\R^{N})$, and let $K>0$, $R>2$ be such that $\supp(u) \subset B_{R-2}(0)$, 
$$
|u(x)|\leq K \quad \text{and}\quad |u(x)-u(y)|\leq K|x-y| \qquad \text{for $x,y \in \R^N$, $x \not=y$.}
$$
Then, as a consequence of $(J1)$, 
\begin{align*}
2\cJ(u,u)&=\int_{ B_{R}(0)}\int_{ B_{R}(0)}(u(x)-u(y))^2 J(x-y)\ dxdy +2\int_{ B_{R}(0)}u^{2}(x)\int_{\R^{N}\setminus  B_{R}(0)}J(x-y)\ dydx\\
&\leq K^2 \int_{ B_{R}(0)}\int_{ B_{R}(0)}|x-y|^{2} J(x-y)\ dx dy +2K^2  \int_{ B_{R-2}(0)}\;\int_{\R^{N}\setminus  B_{R}(0)}J(x-y)\ dydx\\
&\leq 2 K^2|B_R(0)|\Bigl( \int_{ B_{2R}(0)}|z|^{2} J(z)\ dz  + \int_{\R^{N}\setminus  B_{1}(0)}J(z)\ dz \Bigr)<\infty 
\end{align*}
and thus $u \in \cD(\R^N)$.\\
(ii)  Since $v\in \cC^{2}_{c}(\R^{N})$, there exist constants $\delta,K>0$ such that 
\begin{equation}
  \label{eq:C2-local-est}
|2v(x)-v(x+z)-v(x-z)|\leq K |z|^2 \qquad \text{for all $x,z \in \R^N$ with $|z| \le \delta$.}
\end{equation}
Put $h(x,y):= (v(x)-v(y))J(x-y)$ for $x,y \in \R^N$, $x \not = y$.
For every $x \in \R^N$, $\eps \in (0,\delta)$ we then have, since $J$ is even,
\begin{align*}
\int_{\eps \le |y-x|\le \delta} h(x,y)\,dy &= \int_{\eps \le |z| \le \delta} [v(x)-v(x+z)] J(z)\,dz= \int_{\eps \le |z| \le \delta} [v(x)-v(x-z)] J(z)\,dz\\
& = \frac{1}{2} \int_{\eps \le |z| \le \delta} [2 v(x)-v(x+z)-v(x-z)]J(z)\,dz.
\end{align*}
By the first inequality in $(J1)$, (\ref{eq:C2-local-est}) 
and Lebesgue's theorem we thus conclude the existence of the limit 
$$
\lim_{\eps \to 0} \int_{\eps \le |y-x|\le \delta}h(x,y)\,dy =\frac{1}{2} \int_{0 \le |z| \le \delta} [2 v(x)-v(x+z)-v(x-z)]J(z)\,dz.
$$
Moreover we have for $x \in \R^N$ and $\eps \in (0,\delta)$ 
\begin{equation}
  \label{eq:extra-pf-3-prel-dense}
\int_{|y-x| \ge \eps}  h(x,y) \,dy \le 2 \|v\|_{L^\infty(\R^N)} \int_{\R^N \setminus B_\delta(0)}J(z)\,dz + \frac{K}{2}\int_{B_\delta(0)} |z|^2 J(z)\,dz =:K', 
\end{equation}
where the right hand side is finite by the first inequality in $(J1)$. In particular, $[Iv](x)$ is well defined by (\ref{eq:princ-value}), and $|[Iv](x)\bigr |  \le K'$ for $x \in \R^N$, so that $Iv \in L^\infty(\R^N)$. Next, let $\Omega \subset \R^N$ be open and bounded and $u\in \cD(\Omega)$, so that also $u \in L^2(\Omega)$. Then we have, by (\ref{eq:extra-pf-3-prel-dense}) and Lebesgue's Theorem, 
\begin{align*}
\cJ&(u,v)= \frac{1}{2} \lim_{\eps \to 0} \int_{|x-y|\ge \eps} (u(x)-u(y))h(x,y) \,dx \,dy\\
&= \lim_{\eps \to 0} \int_{\R^N} u(x) \int_{|y-x| \ge \eps} h(x,y) \,dy dx = \int_{\R^N} u(x) 
\Bigl[\: \lim_{\eps \to 0} \int_{|y-x| \ge \eps} h(x,y) \,dy\Bigr] dx
=\int_{\R^N} u(x) [Iv](x)\,dx.
\end{align*}

The proof is finished.
\end{proof}

\begin{cor}\label{3-dense}
Let $\Omega\subset \R^{N}$ be open and bounded. Then $\cJ$ is a closed quadratic form with dense form domain $\cD(\Omega)$ in $L^{2}(\Omega)$. Consequently, $\cJ$ is the quadratic form of a unique self-adjoint operator $I$ in $L^2(\Omega)$. Moreover, $C_c^2(\Omega)$ is contained in the domain of $I$, and for every $v \in \cC_c^2(\Omega)$ the function 
$Iv \in  L^2(\Omega)$ is a.e. given by (\ref{eq:princ-value}).  
\end{cor}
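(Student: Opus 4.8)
The plan is to obtain all assertions by assembling the results already proved in this section with the classical representation theorem for closed symmetric forms. For closedness: by Proposition~\ref{complete} the space $\cD(\Omega)$ is complete for the norm $u\mapsto\cJ(u,u)^{1/2}$, and by the Poincar\'e--Friedrichs inequality (\ref{l2-bound}) we have $\|u\|_{L^2(\Omega)}^2\le\Lambda_1(\Omega)^{-1}\cJ(u,u)$ for all $u\in\cD(\Omega)$; hence this norm is equivalent on $\cD(\Omega)$ to the graph norm $u\mapsto(\cJ(u,u)+\|u\|_{L^2(\Omega)}^2)^{1/2}$, so $\cD(\Omega)$ is complete for the latter as well. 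This is precisely the statement that the nonnegative symmetric form $\cJ$ with form domain $\cD(\Omega)$ is closed in $L^2(\Omega)$.

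For density: any $u\in\cC^\infty_c(\Omega)$ belongs to $\cC^{0,1}_c(\R^N)\subset\cD(\R^N)$ by Proposition~\ref{3-prel-dense}(i) and vanishes on $\R^N\setminus\Omega$, hence lies in $\cD(\Omega)$; since $\cC^\infty_c(\Omega)$ is dense in $L^2(\Omega)$, so is the form domain $\cD(\Omega)$. Thus $\cJ$ is a densely defined, closed, nonnegative symmetric form in the Hilbert space $L^2(\Omega)$, and the representation theorem for such forms provides a unique self-adjoint operator $I$ in $L^2(\Omega)$, characterised as follows: $v\in\dom(I)$ with $Iv=w$ if and only if $v\in\cD(\Omega)$ and $\cJ(u,v)=\int_\Omega u\,w\,dx$ for all $u\in\cD(\Omega)$, the function $w\in L^2(\Omega)$ being unique by density of $\cD(\Omega)$.

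It remains to identify $I$ on $\cC^2_c(\Omega)$. Fix $v\in\cC^2_c(\Omega)$; extending $v$ by zero we have $v\in\cC^2_c(\R^N)$ and, as above, $v\in\cD(\Omega)$. By Proposition~\ref{3-prel-dense}(ii) the principal value integral $[Iv](x)$ in (\ref{eq:princ-value}) exists for every $x\in\R^N$, defines a bounded function on $\R^N$, and satisfies $\cJ(u,v)=\int_{\R^N}u(x)[Iv](x)\,dx$ for every $u\in\cD(\Omega)$. Set $w:=1_\Omega\,[Iv]$; since $\Omega$ is bounded and $Iv\in L^\infty(\R^N)$, we have $w\in L^2(\Omega)$, and since every $u\in\cD(\Omega)$ vanishes outside $\Omega$ it follows that $\cJ(u,v)=\int_\Omega u\,w\,dx$ for all $u\in\cD(\Omega)$. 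By the characterisation of $I$ above, $v\in\dom(I)$ and $Iv=w$ in $L^2(\Omega)$, i.e. $Iv$ agrees a.e. on $\Omega$ with the principal value expression (\ref{eq:princ-value}).

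The argument is essentially bookkeeping, so I do not anticipate a real obstacle; the one point deserving attention is the deliberate overloading of the symbol $I$, which denotes both the everywhere-defined principal value operator of Proposition~\ref{3-prel-dense} and the abstract self-adjoint operator on $L^2(\Omega)$ furnished by the form. Reconciling the two amounts to observing that the restriction to $\Omega$ of the former lies in $L^2(\Omega)$ — this is where boundedness of $\Omega$ enters — and that test functions from $\cD(\Omega)$, being supported in $\overline{\Omega}$, are insensitive to the values of $[Iv]$ outside $\Omega$.
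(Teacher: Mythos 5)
Your proof is correct and follows essentially the same route as the paper: closedness from Proposition~\ref{complete} together with the Poincar\'e--Friedrichs inequality (\ref{l2-bound}), density from Proposition~\ref{3-prel-dense}(i), the representation theorem for closed nonnegative forms to get the self-adjoint operator $I$, and Proposition~\ref{3-prel-dense}(ii) to identify $Iv$ on $\cC^2_c(\Omega)$ via the characterisation of $\dom(I)$. The only stylistic difference is that the paper verifies $v\in\dom(I)$ by noting the bound $|\cJ(u,v)|\le|\Omega|\,\|Iv\|_{L^\infty(\Omega)}\|u\|_{L^2(\Omega)}$, whereas you spell out the equivalent ``there exists $w\in L^2(\Omega)$ with $\cJ(u,v)=\int u\,w$'' criterion; these are the same argument.
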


\begin{proof}
Since $\cC^{0,1}_{c}(\Omega) \subset L^2(\Omega)$ is dense, $\cD(\Omega)$ is a dense subset of $L^2(\Omega)$ by Proposition \ref{3-prel-dense}(i). Moreover, the quadratic form $\cJ$ is closed in $L^2(\Omega)$ as a consequence of (\ref{l2-bound}) and Lemma~\ref{complete}. Hence $\cJ$ is the quadratic form of a unique self-adjoint operator $I$ in $L^2(\Omega)$ (see e.g. \cite[Theorem VIII.15, pp. 278]{SR}). Moreover, for every $v \in \cC_c^2(\Omega)$, $u \in \cD(\Omega)$ we have $|J(u,v)| \le |\Omega| \|Iv\|_{L^\infty(\Omega)} \|u\|_{L^2(\Omega)}$ by Proposition~\ref{3-prel-dense}(ii). Consequently, $v$ is contained in the domain of $I$ and satisfies $J(u,v) = \int_{\R^N}u [Iv]\,dx$ for every $u \in \cD(\Omega)$. From Proposition~\ref{3-prel-dense}(ii) it then follows that $Iv$ is a.e. given by (\ref{eq:princ-value}).   
\end{proof}

Next, we wish to extend the definition of $\cJ(v,\phi)$ to more general pairs of functions $(v,\phi)$. In the following, for a measurable subset $U' \subset \R^N$, we define $\cH(U')$ as the space of all functions $v \in L^2(\R^N)$ such that 
\begin{equation}\label{subset}
\rho(v,U'):=  \int_{U'}\int_{U'}(v(x)-v(y))^2J(x-y)\ dxdy<\infty.
\end{equation}
Note that $\cD(\R^N) \cap L^2(\R^N) \subset \cH(U')$ for any measurable subset $U' \subset \R^N$, and thus also $\cD(U) \subset \cH(U')$ for any bounded open set $U \subset \R^N$ by (\ref{l2-bound}).

\begin{lemma}
\label{sec:linear-problem-tech-1}  
Let $U' \subset \R^N$ be an open set and $v,\phi \in \cH(U')$. Moreover, suppose that $\phi \equiv 0$ on $\R^N \setminus U$ for some subset $U \subset U'$ with $\dist(U,\R^N \setminus U')>0$. Then 
\begin{equation}
  \label{eq:finite-int}
\int_{\R^N} \int_{\R^N} |v(x)-v(y)| |\phi(x)-\phi(y)|J(x-y) \,dx dy < \infty,  
\end{equation}
and thus 
$$
\cJ(v,\phi) := \frac{1}{2} \int_{\R^N} \int_{\R^N} (v(x)-v(y)) (\phi(x)-\phi(y))J(x-y) \,dx dy  
$$
is well defined.   
\end{lemma}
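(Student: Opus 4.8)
The plan is to bound the integral in (\ref{eq:finite-int}) by splitting $\R^N \times \R^N$ into three regions according to the position of $x$ and $y$ relative to $U$ and $U'$. First I would set $\delta := \dist(U,\R^N \setminus U') > 0$, so that whenever $x \in U$ and $y \in \R^N \setminus U'$ (or vice versa) we have $|x-y| \ge \delta$. Since $\phi \equiv 0$ outside $U$, the integrand $|v(x)-v(y)||\phi(x)-\phi(y)|J(x-y)$ vanishes unless at least one of $x,y$ lies in $U$; by the symmetry of the integrand in $x \leftrightarrow y$ (recall $J$ is even), it suffices to estimate the integral over $\{x \in U\} = (U \times U') \cup (U \times (\R^N \setminus U'))$, doubling at the end.

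On the region $U \times U'$, I would use Cauchy--Schwarz: the integral of $|v(x)-v(y)||\phi(x)-\phi(y)|J(x-y)$ over $U' \times U'$ is at most $\rho(v,U')^{1/2}\rho(\phi,U')^{1/2}$, which is finite because $v,\phi \in \cH(U')$ (here one uses $U \subset U'$ to enlarge the domain of integration to $U' \times U'$). On the region $U \times (\R^N \setminus U')$, the factor $\phi(y)$ vanishes, so the integrand is $|v(x)-v(y)|\,|\phi(x)|\,J(x-y)$ with $|x-y|\ge \delta$; I would bound $|v(x)-v(y)| \le |v(x)| + |v(y)|$ and split into two pieces. For the piece with $|v(x)|$, I would estimate
$$
\int_{U}|v(x)||\phi(x)|\int_{\R^N \setminus U'}J(x-y)\,dy\,dx \le \Bigl(\int_{\R^N \setminus B_\delta(0)} J(z)\,dz\Bigr)\|v\|_{L^2}\|\phi\|_{L^2},
$$
which is finite since $J \in L^1(\R^N \setminus B_\delta(0))$ by $(J1)$ (note $\delta$ may be less than $1$, but $\int_{B_1 \setminus B_\delta}J \le \delta^{-2}\int_{B_1}|z|^2 J(z)\,dz < \infty$, so $J \in L^1(\R^N \setminus B_\delta(0))$ in any case). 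For the piece with $|v(y)|$, I would use Cauchy--Schwarz in the form $\int_{\R^N \setminus U'}|v(y)|J(x-y)\,dy \le \|v\|_{L^2}\bigl(\int_{\R^N \setminus B_\delta(0)}J\bigr)^{1/2}$ (applying Cauchy--Schwarz to $J^{1/2} \cdot (|v|J^{1/2})$), and then integrate $|\phi(x)|$ over $U$, again landing on a finite quantity by $(J1)$ and $\phi \in L^2$.

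The main obstacle is purely bookkeeping: making sure the region $U \times (\R^N\setminus U')$ is handled without ever needing control of $v$'s oscillation at large scales — which is exactly what the hypothesis $\dist(U, \R^N \setminus U') > 0$ buys us, since it forces the kernel to be integrable on that region. Once (\ref{eq:finite-int}) is established, the bilinear expression $\cJ(v,\phi)$ is absolutely convergent and hence well defined, which is the claim.
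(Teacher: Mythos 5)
Your decomposition is essentially the one the paper uses: separate the region $U'\times U'$, controlled by $\rho(v,U')$ and $\rho(\phi,U')$, from the region $U\times(\R^N\setminus U')$, controlled by the integrability of $J$ away from $B_\delta(0)$. The paper closes the first piece with Young's inequality where you use Cauchy--Schwarz (a cosmetic difference), and your remark that $\int_{B_1\setminus B_\delta}J\le\delta^{-2}\int_{B_1}|z|^2J(z)\,dz<\infty$, so that $J\in L^1(\R^N\setminus B_\delta(0))$ even when $\delta<1$, spells out a step the paper asserts without explanation.

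However, your treatment of the $|v(y)|$ piece has a genuine error. The pointwise bound
$$
\int_{\R^N\setminus U'}|v(y)|\,J(x-y)\,dy \;\le\; \|v\|_{L^2}\Bigl(\int_{\R^N\setminus B_\delta(0)}J\Bigr)^{1/2}
$$
does not follow from Cauchy--Schwarz applied to $J^{1/2}\cdot\bigl(|v|J^{1/2}\bigr)$: that gives $\bigl(\int_{\R^N\setminus U'}J(x-y)\,dy\bigr)^{1/2}\bigl(\int_{\R^N\setminus U'}|v(y)|^2J(x-y)\,dy\bigr)^{1/2}$, and the second factor is a $J$-weighted $L^2$ norm, not $\|v\|_{L^2}$. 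It is not even guaranteed to be finite for each fixed $x$; only its $x$-integral over $U$ is controlled. (The alternative factorization $|v|\cdot J$ would instead require $J\in L^2(\R^N\setminus B_\delta(0))$, which $(J1)$ does not supply.) The fix is to estimate the double integral at once, either via $|\phi(x)||v(y)|\le\tfrac12(|\phi(x)|^2+|v(y)|^2)$ followed by Fubini, which is the paper's route and yields
$$
\int_U\int_{\R^N\setminus U'}|v(y)|^2J(x-y)\,dy\,dx=\int_{\R^N\setminus U'}|v(y)|^2\int_UJ(x-y)\,dx\,dy\le \|v\|_{L^2}^2\int_{\R^N\setminus B_\delta(0)}J,
$$
or via Cauchy--Schwarz with respect to the product measure $J(x-y)\,dx\,dy$ on $U\times(\R^N\setminus U')$. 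Note also that your ``then integrate $|\phi(x)|$ over $U$'' step implicitly uses $|U|<\infty$ (to get $\phi\in L^1(U)$ from $\phi\in L^2$), which is not among the hypotheses; the Fubini argument avoids this as well.
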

 
\begin{proof}
Since $J$ satisfies $(J1)$, we have $K:= \int_{\text{\tiny $\R^N \setminus B_r(0)$}}J(z)\,dz < \infty
$ with $r:= \dist(U,\R^N \setminus U')>0$. As a consequence, 
\begin{align*}
\int_{\R^N}&  \int_{\R^N} |v(x)-v(y)| |\phi(x)-\phi(y)|J(x-y) \,dx dy\\
 &= \int_{U'}\int_{U'}|v(x)-v(y)||\varphi(x)-\varphi(y)|J(x-y)\ dxdy + 2 \int_{U}\int_{\R^{N}\setminus U'} |v(x)-v(y)| |\varphi(x)|J(x-y)\ dydx\\
& \le \frac{1}{2} \bigl[\rho(v,U') + \rho(\phi,U')\bigr] +  \int_{U}\int_{\R^{N}\setminus U'} \Bigl[2 \bigl(|v(x)|^2+ |v(y)|^2\bigr) + |\varphi(x)|^2\Bigr] J(x-y)\ dydx\\
& \le \frac{1}{2} \bigl[\rho(v,U') + \rho(\phi,U')\bigr] +  K \Bigl( 4 \|v\|_{L^2(\R^N)}^2+ \|\phi\|_{L^2(\R^N)}^2\Bigr)<\infty.
\end{align*}
 \end{proof}

\begin{lemma}\label{3-cutoff}
If $U' \subset \R^N$ is open and $v\in \cH(U')$, then $v^\pm \in \cH(U')$ and $\rho(v^\pm,U') \le \rho(v,U')$.
\end{lemma}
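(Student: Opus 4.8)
The plan is to reduce the statement to an elementary pointwise inequality for the positive and negative part functions on $\R$, which then survives multiplication by the nonnegative kernel $J$ and integration over $U' \times U'$.

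First I would record the one-dimensional fact that the maps $t \mapsto t^+$ and $t \mapsto t^-$ are $1$-Lipschitz on $\R$; for instance one may write $t^{\pm} = \tfrac12(|t| \pm t)$ and invoke the triangle inequality, or simply distinguish the sign cases. Squaring the resulting estimates $|a^+ - b^+| \le |a-b|$ and $|a^- - b^-| \le |a-b|$ gives
\[
(a^+ - b^+)^2 \le (a-b)^2 \quad \text{and} \quad (a^- - b^-)^2 \le (a-b)^2 \qquad \text{for all } a,b \in \R.
\]

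Next I would apply this with $a = v(x)$ and $b = v(y)$ for $x,y \in U'$. Since $J \ge 0$, multiplying by $J(x-y)$ preserves the inequality, so
\[
(v^{\pm}(x) - v^{\pm}(y))^2 J(x-y) \le (v(x)-v(y))^2 J(x-y) \qquad \text{for a.e. } (x,y) \in U' \times U'.
\]
Integrating over $U' \times U'$ and recalling the definition (\ref{subset}) of $\rho$, I obtain $\rho(v^{\pm}, U') \le \rho(v, U') < \infty$, which is the claimed inequality. Finally, since $0 \le v^{\pm} \le |v|$ pointwise and $v \in L^2(\R^N)$ by the definition of $\cH(U')$, we also have $v^{\pm} \in L^2(\R^N)$; combined with the finiteness of $\rho(v^{\pm},U')$ just established, this shows $v^{\pm} \in \cH(U')$.

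There is no serious obstacle in this argument; the only points requiring (minimal) care are the measurability of $v^{\pm}$, which is automatic since $v$ is measurable, and the fact that one is allowed to multiply the pointwise inequality by the nonnegative measurable weight $J(x-y)$ before integrating, which is justified by monotonicity of the integral (Tonelli applies since all integrands are nonnegative).
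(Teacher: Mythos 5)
Your proof is correct, but it takes a genuinely different route from the paper's. You reduce the claim to the one-dimensional $1$-Lipschitz estimate $|a^{\pm}-b^{\pm}|\le|a-b|$, square it, multiply by $J\ge 0$, and integrate; Tonelli handles the nonnegative integrands. The paper instead writes $v=v^{+}-v^{-}$ and expands the quadratic form pointwise,
\[
(v(x)-v(y))^2=(v^{+}(x)-v^{+}(y))^2+(v^{-}(x)-v^{-}(y))^2+2\bigl[v^{+}(x)v^{-}(y)+v^{+}(y)v^{-}(x)\bigr],
\]
using $v^{+}v^{-}\equiv 0$ to see that the cross term is nonnegative, and then integrates. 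The paper's computation actually yields the slightly stronger conclusion $\rho(v^{+},U')+\rho(v^{-},U')\le\rho(v,U')$, though this strengthening is not used later. Your Lipschitz argument is a bit more direct in that it never writes $\rho(v,U')$ as a sum of three separate integrals (so there is no need to check that the pieces are individually well-defined); on the other hand it proves only the weaker bound, which is exactly what the lemma asserts. Both arguments are elementary and correct, and your handling of the $L^2$ membership of $v^{\pm}$ matches the paper's.
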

\begin{proof}
We have $v^\pm \in L^2(\R^N)$ since $v \in L^2(\R^N)$. Moreover, $v^+(x) v^-(x) =0$ for $x \in \R^N$ and thus
\begin{align*}
&\rho(v,U')= \rho(v^+,U') + \rho(v^-,U') - 2\int_{U'}\int_{U'} (v^+(x)-v^+(y))(v^-(x)-v^-(y))J(x-y)\ dxdy\\  
&=\rho(v^+,U') + \rho(v^-,U') +2 \int_{U'}\int_{U'}[v^{+}(x)v^{-}(y)+v^{+}(y)v^{-}(x)] J(x-y)\ dxdy\\
&\geq \rho(v^+,U') + \rho(v^-,U'). 
\end{align*}
The claim follows.
\end{proof}

We close this section with a remark on assumption $(J2)$.

\begin{bem}
\label{sec:equality-monotonicity}
Suppose that $(J2)$ is satisfied. Then, for every fixed $z' \in \R^N$, the function $t \mapsto J(t,z')$ is strictly decreasing in $|t|$ and therefore coincides a.e. on $\R$ with 
the function $t \mapsto \tilde J(t,z'):= \lim \limits_{s \to t^-}J(s,z')$. Hence $J$ and the function $\tilde J$ differ only on a set of measure zero in $\R^N$. Replacing $J$ by $\tilde J$ if necessary, we may therefore deduce from $(J2)$ the symmetry property  
\begin{equation}
  \label{eq:adj-measure}
J(-t,z')= J(t,z') \qquad \text{for every $z' \in \R^{N-1}, t\in \R$.}   
\end{equation}
This will be used in the following section.   
\end{bem}

\section{The linear problem associated with a hyperplane reflection}\label{mp}

In the following, we consider a fixed open affine half space $H \subset \R^N$, and we let $Q: \R^N \to \R^N$ denote the reflection at $\partial H$. For the sake of brevity, we sometimes write $\bar x$ in place of $Q(x)$ for $x \in \R^N$. A function $v:\R^{N}\to \R^{N}$ is called  antisymmetric (with respect to $Q$) if $v(\bar x)=-v(x)$ for $x \in \R^{N}$. As before, we consider an even kernel $J: \R^N \setminus \{0\} \to [0,\infty)$ satisfying $(J1)$. We also assume the following symmetry and monotonicity assumptions on $J$: 
\begin{align}
&J(\bar x-\bar y)=J(x-y) \qquad \text{for all $x,y \in \R^N$;} \label{sym-Q-J-1} \\
&J(x-y) \ge J(x- \bar y) \qquad \text{for all $x,y \in H$.}  \label{sym-Q-J-2}
\end{align}
 
\begin{bem}\label{symmetry-need}
If $(J1)$, $(J2)$ and (\ref{eq:adj-measure}) are satisfied and 
$$
H= \{x \in \R^N\::\: x_1 > \lambda\} \qquad \text{or}\qquad H= \{x \in \R^N\::\: x_1 < -\lambda\}
$$
for some $\lambda \ge 0$, then (\ref{sym-Q-J-1}) and (\ref{sym-Q-J-2}) hold. If $\lambda>0$, then $J$ also satisfies the following strict variant of (\ref{sym-Q-J-2}):
\begin{equation}
  \label{sym-Q-J-2-strict}
J(x-y) > J(x- \bar y) \qquad \text{for all $x,y \in H$.}  
\end{equation}
We will need this property in Proposition~\ref{hopf-simple2} below.
 \end{bem}

\begin{lemma}
\label{sec:linear-problem-tech}  
Let $J$ satisfy $(J1)$, (\ref{sym-Q-J-1}) and (\ref{sym-Q-J-2}). Moreover, let $U' \subset \R^N$ be an open set with $Q(U')=U'$, and let $v \in \cH(U')$ be an antisymmetric function such that $v \ge 0$ on $H \setminus U$ for some open bounded set $U \subset H$ with 
$\overline U \subset U'$.  Then the function $w:= 1_H\, v^-$ is contained in $\cD(U)$ and satisfies 
\begin{equation}
  \label{eq:key-ineq}
\cJ(w,w) \le - \cJ(v,w) 
\end{equation}
\end{lemma}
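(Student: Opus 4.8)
The plan is to exploit the antisymmetry of $v$ and the symmetry $Q(U')=U'$ to rewrite all the integrals defining $\cJ(v,w)$ and $\cJ(w,w)$ as integrals over $H \times H$ (up to the obvious pieces involving $\R^N \setminus U'$), and then to combine the kernel monotonicity hypotheses \eqref{sym-Q-J-1}--\eqref{sym-Q-J-2} with the sign information to get the inequality. First I would check that $w = 1_H v^- \in \cD(U)$: since $v \in \cH(U')$ we have $v^- \in \cH(U')$ with $\rho(v^-,U') \le \rho(v,U')$ by Lemma~\ref{3-cutoff}, and since $v \ge 0$ on $H \setminus U$ we get $v^- \equiv 0$ on $H \setminus U$, hence $w \equiv 0$ outside $U$; combined with $\overline U \subset U'$ and $\dist(U, \R^N \setminus U')>0$, Lemma~\ref{sec:linear-problem-tech-1} applies and $\cJ(w,w)$, $\cJ(v,w)$ are well defined, and the finiteness of $\rho(w,U')$ gives $w \in \cD(U)$ since $U$ is bounded.

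Next I would set up the reflection bookkeeping. For a measurable function $g$ on $\R^N$ write $g(\bar x)$ for $g \circ Q$. Using $Q(U')=U'$, $J(\bar x - \bar y) = J(x-y)$, and the decomposition $\R^N = H \cup Q(H) \cup \partial H$ (with $\partial H$ null), any bilinear expression $\cJ(g,h)$ can be folded: the integral over $Q(H) \times Q(H)$ equals the integral over $H \times H$ after the substitution $x \mapsto \bar x$, $y \mapsto \bar y$, and the two mixed pieces $H \times Q(H)$ and $Q(H) \times H$ are equal to each other and transform under $y \mapsto \bar y$ into an integral over $H \times H$ with kernel $J(x - \bar y)$. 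Concretely, for antisymmetric $v$ (so $v(\bar x) = -v(x)$) and for $w = 1_H v^-$ (so $w$ is supported in $H$, $w(\bar x) = 0$ for $x \in H$, and on $H$ one has $w = v^-$, $v = v^+ - v^-$), I would derive the identities
\[
\cJ(w,w) = \frac12 \int_H \int_H (w(x)-w(y))^2 J(x-y)\,dxdy + \int_H \int_H w(x)w(y) J(x-\bar y)\,dxdy + R_w,
\]
\[
\cJ(v,w) = \frac12 \int_H \int_H (v(x)-v(y))(w(x)-w(y)) J(x-y)\,dxdy + \int_H \int_H v(x) w(y) J(x - \bar y)\,dxdy + R_{v,w},
\]
where $R_w$, $R_{v,w}$ collect the terms coming from $\R^N \setminus U'$; since $w \equiv 0$ there and $w$ is supported in $U \subset U' \cap H$, these remainder terms actually reduce to integrals over $U \times (\R^N \setminus U')$ that will be handled the same way as the main terms. (The cleanest route is to first verify the folding identity $\cJ(g,h) = \frac12 \int_H \int_H (g(x)-g(y))(h(x)-h(y))J(x-y) + \frac12\int_H\int_H (g(x)-g(\bar y))(h(x)-h(\bar y)) J(x-\bar y)$ valid for any $h$ supported in $\overline H$, then specialize.)

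Then I would add: $\cJ(v,w) + \cJ(w,w)$. Writing $v = v^+ - v^- = v^+ - w$ on $H$, the first-order-difference terms combine so that $(v(x)-v(y)) + (w(x)-w(y)) = (v^+(x) - v^+(y))$ on $H \times H$, and the mixed terms combine to $\int_H\int_H (v(x)+w(x)) w(y) J(x-\bar y) = \int_H \int_H v^+(x) w(y) J(x-\bar y)$. Thus
\[
\cJ(v,w)+\cJ(w,w) = \frac12 \int_H\int_H (v^+(x)-v^+(y))(v^-(x)-v^-(y)) J(x-y)\,dxdy + \int_H\int_H v^+(x)\, v^-(y)\, J(x-\bar y)\,dxdy
\]
plus the analogous nonnegative remainder. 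Now on $H$ we have $v^+ v^- \equiv 0$, so $(v^+(x)-v^+(y))(v^-(x)-v^-(y)) = -v^+(x)v^-(y) - v^+(y)v^-(x) \le 0$; moreover $J(x-y) \ge J(x-\bar y) \ge 0$ for $x,y \in H$ by \eqref{sym-Q-J-2}. Hence
\[
\frac12 (v^+(x)-v^+(y))(v^-(x)-v^-(y)) J(x-y) + v^+(x) v^-(y) J(x-\bar y) \le -\tfrac12\bigl(v^+(x)v^-(y) + v^+(y)v^-(x)\bigr) J(x-\bar y) + v^+(x)v^-(y) J(x-\bar y),
\]
and symmetrizing the right side in $(x,y)$ shows it integrates to something $\le 0$. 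The remainder terms are handled by the same pointwise inequality (there $J(x-\bar y)$ is replaced by $J(x-y)$ with $y \notin U'$, but the relevant product $v^+(x)v^-(y)$-type terms again cancel against the squared-difference term). Therefore $\cJ(v,w) + \cJ(w,w) \le 0$, which is \eqref{eq:key-ineq}. The main obstacle, and the step to carry out with care, is the folding/bookkeeping in the second paragraph — keeping track of which pieces transform under $x\mapsto\bar x$ versus $y \mapsto \bar y$, making sure the integrability from Lemma~\ref{sec:linear-problem-tech-1} legitimizes splitting the double integral over $\R^N\times\R^N$ into the $H\times H$, $H \times Q(H)$, etc.\ pieces, and correctly identifying the remainder terms involving $\R^N \setminus U'$ as having the right sign; the final pointwise inequality is then routine.
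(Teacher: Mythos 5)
Your high-level strategy is right and is, at bottom, the same one the paper uses: reflect everything into $H\times H$, use $v^+v^-\equiv 0$ on $H$, and then invoke $J(x-y)\ge J(x-\bar y)$. But the write-up has two genuine problems.

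First, you never actually establish $w=1_Hv^-\in\cH(U')$, which is needed before Lemma~\ref{sec:linear-problem-tech-1} can even be invoked to make $\cJ(v,w)$ meaningful. Lemma~\ref{3-cutoff} applied to $v$ gives $v^-\in\cH(U')$, but $1_Hv^-$ is \emph{not} $v^-$: by antisymmetry $v^-(\bar x)=v^+(x)$, so $v^-$ does not vanish off $H$, and $\rho(1_Hv^-,U')\le\rho(v^-,U')$ is false in general. The paper's first (and longest) display is devoted to exactly this: it proves $\rho(1_Hv,U')\le\rho(v,U')$ — a step that genuinely uses the antisymmetry of $v$, the invariance \eqref{sym-Q-J-1}, and crucially the monotonicity \eqref{sym-Q-J-2} — and only then applies Lemma~\ref{3-cutoff}, to $1_Hv$ rather than to $v$, to get $w=(1_Hv)^-\in\cH(U')$. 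Your proposal misses that \eqref{sym-Q-J-2} is already needed at this preliminary stage.

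Second, the "remainder terms $R_w$, $R_{v,w}$ coming from $\R^N\setminus U'$" do not exist: the folding of $\cJ(g,h)$ over $\R^N\times\R^N$ into $H\times H$, the two mixed pieces, and $Q(H)\times Q(H)$ is exact and involves no truncation to $U'$ (that set enters only in the $\rho(\cdot,U')$ seminorms, not in $\cJ$). Calling these nonexistent remainders "nonnegative" is also the wrong sign for the target $\cJ(v,w)+\cJ(w,w)\le 0$. There are further arithmetic slips: for $h$ supported in $\overline H$ the cross piece carries coefficient $1$, not $\tfrac12$; the cross term of $\cJ(w,w)$ is $\int_H\int_H w(x)^2J(x-\bar y)$, not $\int_H\int_H w(x)w(y)J(x-\bar y)$; and after using $wv^+\equiv 0$ on $H$ the combined cross term of $\cJ(v,w)+\cJ(w,w)$ is $\int_H\int_H w(x)v(y)J(x-\bar y)$, so your displayed "identity" is really only an overestimate (it happens, by luck, to still be $\le 0$). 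The paper sidesteps all of this folding bookkeeping by the single pointwise observation $(w+v)w\equiv 0$ on $\R^N$, which gives $(w(x)-w(y))^2+(v(x)-v(y))(w(x)-w(y)) = -\bigl(w(x)[w(y)+v(y)]+w(y)[w(x)+v(x)]\bigr)$ and hence reduces $\cJ(w,w)+\cJ(v,w)$ directly to $-\int_H\int_H w(x)\bigl[v^+(y)J(x-y)-v(y)J(x-\bar y)\bigr]\,dy\,dx\le 0$.
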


\begin{proof}
We first show that $w \in \cH(U')$. Clearly we have $w \in L^2(\R^N)$, since $v \in L^2(\R^N)$. Moreover, by (\ref{sym-Q-J-1}), the symmetry of $U'$, the antisymmetry of $v$ and (\ref{sym-Q-J-2}) we have 
\begin{align}
 &\rho(v,U') =\int_{U'\cap H}\int_{U'\cap H}(v(x)-v(y))^2 J(x-y)\ dxdy  \notag\\
&\qquad\qquad+\int_{U'\setminus H}\int_{U'\setminus H}(v(x)-v(y))^2 J(x-y)\ dxdy+2\int_{U'\setminus H}\int_{U'\cap H}(v(x)-v(y))^2 J(x-y)\ dxdy  \notag\\
&=2\int_{U'\cap H}\int_{U'\cap H}\Bigl[(v(x)-v(y))^2 J(x-y) + (v(x)+v(y))^2 J(x-\bar y)\Bigr]\ dxdy  \notag\\
&\geq \int_{U'\cap H}\int_{U'\cap H} \Bigl[(v(x)-v(y))^2 J(x-y) + [(v(x)-v(y))^2 + (v(x)+v(y))^2] J(x-\bar y)\Bigr]\ dxdy  \notag\\ 
&\geq \int_{U'\cap H}\int_{U'\cap H} \Bigl[(v(x)-v(y))^2 J(x-y) + 2v^2(x) J(x-\bar y)\Bigr]\ dxdy  \notag\\ 
&= \int_{U'}\int_{U'}(1_{H}v(x)-1_{H}v(y))^2 J(x-y)\ dxdy = \rho(1_{H}\,v ,U') \label{bound3}
\end{align}
and thus $\rho(1_{H}\,v,U') < \infty$. Hence $1_{H}\,v \in \cH(U')$ and thus also $w \in \cH(U')$ by Lemma~\ref{3-cutoff}. Since $w \equiv 0$ in $\R^N \setminus U$, the right hand side of (\ref{eq:key-ineq}) is well defined and finite by Lemma~\ref{sec:linear-problem-tech-1}.  To show  (\ref{eq:key-ineq}), we first note that 
$$
[w+v]w= [ 1_H v^+ + 1_{\R^N \setminus H} v]1_H v^- \equiv 0 \qquad  \text{on $\R^N$}
$$
and therefore  
$$
[w(x)-w(y)]^2 + [v(x)-v(y)] [w(x)-w(y)] = - \Bigl(w(x)[w(y)+v(y)] + w(y)[w(x)+v(x)]\Bigr)
$$
for $x,y \in \R^N$. Using this identity in the following together with the antisymmetry of $v$, the symmetry properties of $J$ and the fact that $w \equiv 0$ on $\R^N \setminus H$, we find that 
\begin{align*}
\cJ(w,w) + \cJ(v,w) &= 
-\int_{H} \int_{\R^N}w(x) [w(y)+v(y)] J(x-y)\,dy dx\\
 &=-\int_{H} \int_{\R^N}w(x) [1_H(y)v^+(y)  +1_{\R^N \setminus H} v(y)] J(x-y)\,dy dx\\
&=-\int_{H} \int_{H}w(x) [v^+(y)J(x-y) -v(y) J(x-\bar y)] \,dy dx \:\le\: 0, 
 \end{align*}
where in the last step we used the fact that $v^+(y) \ge v(y)$ and $J(x-y) \ge J(x-\bar y) \ge 0$ for $x,y \in H$. Hence (\ref{eq:key-ineq}) is true, and in particular we have $\cJ(w,w)< \infty$. Since $w \equiv 0$ on $\R^N \setminus U$, it thus follows that $w \in \cD(U)$.
\end{proof}

In order to implement the moving plane method, we have to deal with the class of antisymmetric supersolutions of a class of linear problems. A related notion was introduced in \cite{JW13} in a parabolic setting related to the fractional Laplacian. 

\begin{defi}\label{3-defi-anti}
Let $U\subset H$ be an open bounded set and let $c\in L^{\infty}(U)$.  We call an antisymmetric 
function $v: \R^N \to \R^N$ an \textit{antisymmetric supersolution} of the problem 
\begin{equation}\label{linear-prob}
Iv= c(x)v\quad \text{ in $U$,}\qquad  v \equiv 0 \quad \text{on $H \setminus U$}
\end{equation}
if $v \in \cH(U')$ for some open bounded set $U' \subset \R^N$ with $Q(U')=U'$ and $\overline U \subset U'$,  $v\geq 0$ on $H\setminus U$ and 
\begin{equation}\label{3-eq-sol2}
\cJ(u,\phi)\geq \int_{U}c(x)u(x)\varphi(x)\ dx \qquad \text{ for all $\varphi\in \cD(U)$, $\varphi\geq0$.} 
\end{equation}
\end{defi}

\begin{bem}\label{3-anti}
Assume $(J1)$ and (\ref{sym-Q-J-1}), and let $\Omega \subset \R^{N}$ be an open bounded set such that $Q(\Omega \cap H)\subset \Omega$. 
 Furthermore, let $f: \Omega \times\R\to\R$ be a Carath\'eodory function satisfying $(F1)$ and 
such that
 \begin{equation}
   \label{eq:F2H}
 f(\bar x,\tau) \geq f(x,\tau)\qquad \text{for every $\tau \in \R$, $x \in H\cap \Omega$.}  
 \end{equation}
If $u \in \cD(\Omega)$ is a nonnegative solution of $(P)$, then $v:=u\circ Q-u$ is an antisymmetric supersolution of (\ref{linear-prob}) with $U:= \Omega \cap H$ and $c \in L^\infty(U)$ defined by  
$$
c(x)= \left \{
  \begin{aligned}
  &\frac{f(x,u(\bar x))-f(x,u(x))}{v(x)}&&\qquad \text{if $v(x) \not= 0$;}\\
  &0 &&\qquad \text{if $v(x)= 0$.}
  \end{aligned}
\right.
$$
Indeed, since $u\in \cD(\Omega)$, we have $v \in \cD(\R^{N}) \cap L^2(\R^N)$ and thus $v \in 
\cH(U')$ for any open set $U' \subset \R^N$. Moreover, $v \ge 0$ on $H \setminus U$ since $u$ is nonnegative and $u \equiv 0$ on $H \setminus U$. Furthermore, if $\varphi\in \cD(U)$, then $\varphi\circ Q-\varphi\in \cD(\Omega)$ by the symmetry properties of $J$ and since $Q(U)\subset \Omega$. If, in addition, $\varphi \ge 0$, then we have, using (\ref{sym-Q-J-1}), 
\begin{align*}
&\cJ(v,\phi)= \cJ(u \circ Q - u,\phi)=\cJ(u,\phi \circ Q - \phi)= 
\int_{\Omega}f(x,u)[\varphi \circ Q - \varphi]\,dx\\
&=\!\!\!\int_{ Q(U)}\!\!\!f(x,u(x))\varphi \circ Q\,dx - \int_{U}\!\!f(x,u(x))\varphi\,dx=\!\!\int_{U}[f(\bar x,u(\bar x))-f(x,u(x))]\varphi(x)\,dx \ge \int_{U}c(x) v \varphi\,dx.
\end{align*}
Here (\ref{eq:F2H}) was used in the last step. The boundedness of $c$ follows from $(F1)$.
\end{bem} 

We now have all the tools to establish maximum principles for antisymmetric supersolutions of (\ref{linear-prob}). 

\begin{prop}\label{4-elliptic-max1}
Assume that $J$ satisfies $(J1)$, (\ref{sym-Q-J-1}) and (\ref{sym-Q-J-2}), and let $U \subset H$ be  an open bounded set.   Let $c\in L^{\infty}(U)$ with $\|c^+\|_{L^\infty(U)} <\Lambda_1(U)$, where $\Lambda_1(U)$ is given in (\ref{l2-bound}).\\
Then every antisymmetric supersolution $v$ of (\ref{linear-prob}) in $U$ satisfies $v\geq 0$ a.e. in $H$. 
\end{prop}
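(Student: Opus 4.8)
The plan is to test the weak supersolution inequality (\ref{3-eq-sol2}) against the admissible test function $\varphi := w = 1_H\, v^- \in \cD(U)$, whose membership in $\cD(U)$ is guaranteed by Lemma~\ref{sec:linear-problem-tech} (note $v \ge 0$ on $H \setminus U$ since $v$ is an antisymmetric supersolution, so the hypotheses of that lemma are met with this $U$ and the given $U'$). Since $w \ge 0$, the defining inequality of an antisymmetric supersolution gives
\[
\cJ(v,w) \ge \int_U c(x)\, v(x)\, w(x)\, dx.
\]
On the set $H$ we have $v w = v \cdot v^- = -(v^-)^2 = -w^2$, while $w \equiv 0$ on $\R^N \setminus H$, so the right-hand side equals $-\int_U c(x)\, w^2(x)\, dx \ge -\int_U c^+(x)\, w^2(x)\, dx \ge -\|c^+\|_{L^\infty(U)} \|w\|_{L^2(U)}^2$. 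Combining this with the key inequality (\ref{eq:key-ineq}) from Lemma~\ref{sec:linear-problem-tech}, namely $\cJ(w,w) \le -\cJ(v,w)$, we obtain
\[
\cJ(w,w) \le -\cJ(v,w) \le \int_U c(x)\, w^2(x)\, dx \le \|c^+\|_{L^\infty(U)}\, \|w\|_{L^2(U)}^2.
\]

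On the other hand, $w \in \cD(U)$, so the Poincaré–Friedrichs inequality (\ref{l2-bound}) yields $\cJ(w,w) \ge \Lambda_1(U)\, \|w\|_{L^2(U)}^2$. Hence
\[
\Lambda_1(U)\, \|w\|_{L^2(U)}^2 \le \cJ(w,w) \le \|c^+\|_{L^\infty(U)}\, \|w\|_{L^2(U)}^2,
\]
and since $\|c^+\|_{L^\infty(U)} < \Lambda_1(U)$ by hypothesis, this forces $\|w\|_{L^2(U)} = 0$, i.e. $w \equiv 0$. Therefore $v^- \equiv 0$ on $H$, which is exactly the assertion $v \ge 0$ a.e. in $H$.

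The conceptual work has already been done in Lemma~\ref{sec:linear-problem-tech}, which simultaneously establishes that $w = 1_H v^-$ is a legitimate test function in $\cD(U)$ and supplies the crucial estimate $\cJ(w,w) + \cJ(v,w) \le 0$ — this last inequality is really where the symmetry/monotonicity hypotheses (\ref{sym-Q-J-1})–(\ref{sym-Q-J-2}) on $J$ enter. Given that lemma, the present proposition is a short "coercivity beats the zero-order term" argument, so I anticipate no real obstacle; the only point requiring a little care is checking that $w$ is indeed an admissible nonnegative test function, i.e. verifying the hypotheses of Lemma~\ref{sec:linear-problem-tech} are inherited from Definition~\ref{3-defi-anti} (in particular that the given $U'$ with $Q(U')=U'$, $\overline U \subset U'$ and $v \in \cH(U')$ does the job), and that the sign manipulation $vw = -w^2$ on $H$ together with $w\equiv 0$ off $H$ is used consistently.
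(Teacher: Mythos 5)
Your proof is correct and follows essentially the same path as the paper's: apply Lemma~\ref{sec:linear-problem-tech} to obtain $w = 1_H v^- \in \cD(U)$ together with $\cJ(w,w) \le -\cJ(v,w)$, test the supersolution inequality against $w$, use $vw = -w^2$ on $H$, and close with the Poincar\'e--Friedrichs bound (\ref{l2-bound}) against $\|c^+\|_{L^\infty(U)} < \Lambda_1(U)$. No discrepancies.
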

\begin{proof}
By Lemma \ref{sec:linear-problem-tech} we have that  
$w:=1_{H} v^{-} \in \cD(U)$ and $\cJ(w,w) \le -\cJ(v,w)$. Consequently,
\begin{align*}
\Lambda_{1}(U) \|w\|_{L^{2}(U)}^{2} \le \cJ(w,w) \le -\cJ(v,w) \le- \int_{U}c(x) v(x)w(x) \ dx
&= \int_{U}c(x) w^2(x) \ dx \\
&\le \|c^+\|_{L^\infty(U)}\|w\|_{L^{2}(U)}^{2}.
\end{align*}
Since $\|c^+\|_{L^\infty(U)}< \Lambda_{1}(U)$ by assumption, we conclude that $\|w\|_{L^{2}(U)}=0$ and hence $v\geq0$ a.e. in $H$.
\end{proof}

We note that a combination of Proposition~\ref{4-elliptic-max1} with Lemma~\ref{3-mengen-k} gives rise to an ``antisymmetric'' small volume maximum principle which generalizes the available variants for the fractional Laplacian, see \cite[Proposition 3.3 and Corollary 3.4]{FJ13} and \cite[Lemma 5.1]{RS13}. Next we prove a strong maximum principle which requires the strict inequality (\ref{sym-Q-J-2-strict}).  

\begin{prop}\label{hopf-simple2}
Assume that $J$ satisfies $(J1)$, (\ref{sym-Q-J-1}) and (\ref{sym-Q-J-2-strict}). Moreover, let $U \subset H$ be  an open bounded set and $c \in L^\infty(U)$. 
Furthermore, let  $v$  be an antisymmetric supersolution of (\ref{linear-prob})  
such that $v\geq0$ a.e. in $H$. Then either $v \equiv 0$ a.e. in $\R^{N}$, or 
$$
\underset{K}\essinf\: v >0 \qquad \text{for every compact subset $K  \subset U$.}
$$
\end{prop}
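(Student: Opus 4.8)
\textbf{Proof plan for Proposition~\ref{hopf-simple2}.}
The plan is to argue by contradiction: suppose $v$ is an antisymmetric supersolution of (\ref{linear-prob}) with $v \ge 0$ a.e. in $H$, that $v$ does not vanish identically, yet $\essinf_K v = 0$ for some compact $K \subset U$. Since the conclusion is purely local in $U$, I would first reduce to the case $c \equiv 0$ by absorbing $c(x)v$ into the right-hand side; more precisely, because $v \ge 0$ on $H$ and $\varphi \ge 0$ in the test class, the inequality (\ref{3-eq-sol2}) still gives $\cJ(v,\varphi) \ge \int_U c(x) v \varphi \ge -\|c^-\|_{L^\infty(U)}\int_U v\varphi$, so $v$ is a nonnegative antisymmetric supersolution of $Iv = -\|c^-\|_\infty v$; hence it suffices to treat a supersolution with a nonpositive zero-order term, and in fact one may assume the term is $0$ after a further trivial comparison, since making $c$ more negative only weakens the inequality in the direction we need.

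The heart of the matter is a pointwise propagation estimate. Fix a ball $B = B_r(x_0)$ with $\overline B \subset U$ on which we want a positive lower bound. Choose a nonnegative test function $\varphi \in \cD(U)$ supported in a slightly smaller ball $B' \Subset B$ with $\varphi > 0$ on $B'$; then $0 \le \cJ(v,\varphi) = \int_{\R^N}\int_{\R^N}(v(x)-v(y))(\varphi(x)-\varphi(y))J(x-y)\,dx\,dy$, which after splitting into the $H$ and $\R^N \setminus H$ contributions and using antisymmetry of $v$, (\ref{sym-Q-J-1}) and (\ref{sym-Q-J-2-strict}) should be rewritten as an integral over $H \times H$ of the form
\begin{equation*}
\int_{H}\int_{H}\bigl(v(x)-v(y)\bigr)\bigl(\varphi(x)-\varphi(y)\bigr)J(x-y)\,dx\,dy + \int_{H}\int_{H}\bigl(v(x)+v(y)\bigr)\bigl(\varphi(x)+\varphi(y)\bigr)J(x-\bar y)\,dx\,dy \ge 0,
\end{equation*}
where the second kernel is strictly positive on $H \times H$. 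The idea is then standard for nonlocal strong maximum principles: if $v$ vanishes on a set of positive measure inside $U$, testing against $\varphi$ concentrated near that set forces, via the long-range positive kernel terms, that $\int_H v(y)\varphi'(y)\,dy \le 0$ type inequalities hold against arbitrary nonnegative weights, which together with $v \ge 0$ pushes $v \equiv 0$ on larger and larger regions of $H$; because $J(x-\bar y)>0$ for \emph{all} $x, y \in H$, the support of the zero set spreads to all of $H$ in one step, forcing $v \equiv 0$ a.e. in $H$ and hence, by antisymmetry, in $\R^N$.

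More concretely, I would implement this as follows. Assume $\essinf_K v = 0$ on some compact $K$; then there is a point (or rather a set of positive measure) where $v$ is arbitrarily small, and by a Lebesgue-density / covering argument one finds a ball $B_\rho(z) \subset U$ on which $\int_{B_\rho(z)} v$ is as small as we like relative to $\rho^N$. Take $\varphi = \varphi_\rho$ an approximation of the indicator of $B_\rho(z)$, nonnegative and in $\cD(U)$. Plugging into the reorganized inequality above and discarding the manifestly sign-definite ``good'' part, one gets roughly
\begin{equation*}
\int_{H}v(y)\,\Bigl(\int_{B_\rho(z)}\bigl[J(x-y)+J(x-\bar y)\bigr]\,dx\Bigr)\,dy \le C\Bigl(\int_{B_\rho(z)}v\Bigr) + \bigl(\text{terms}\to 0\bigr),
\end{equation*}
and since the inner $x$-integral is bounded below by a positive constant times $\rho^N$ uniformly for $y$ in any fixed compact subset of $H$ (using (\ref{sym-Q-J-2-strict}) and positivity of $J$ near the origin, and $(J1)$ for integrability), letting $\rho$ (and the size of $\int v$ over it) go to zero forces $\int_{H}v(y)\,dy \cdot \mathrm{const} \le 0$, i.e. $v \equiv 0$ a.e. in $H$. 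Combined with antisymmetry this gives $v \equiv 0$ a.e. in $\R^N$, the desired dichotomy.

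\textbf{Main obstacle.} The delicate point is controlling the interaction of the test function's support with the rest of $H$ when $v$ is merely in $\cH(U')$ and not assumed continuous: one must be careful that the ``good'' quadratic part $\cJ(w,w)$-type terms really do dominate and that the error terms coming from the region outside $U$ (where $\varphi = 0$) and from the singularity of $J$ at the origin are genuinely negligible, which is where one needs to exploit both parts of $(J1)$ together with the fact that $\varphi$ can be taken smooth and compactly supported inside $U$. Getting a uniform-in-$\rho$ positive lower bound for $\int_{B_\rho(z)}[J(x-y)+J(x-\bar y)]\,dx$ that does not degenerate is the second technical hurdle; here (\ref{sym-Q-J-2-strict}), i.e. strictness of the reflected inequality, is exactly what prevents this quantity from vanishing and is therefore essential — the non-strict hypothesis (\ref{sym-Q-J-2}) would not suffice.
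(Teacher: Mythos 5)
Your proposed reduction to $c\equiv 0$ is where the argument breaks down, and it breaks in a way that makes the subsequent steps unsalvageable without a different idea. The supersolution inequality gives $\cJ(v,\varphi)\ge\int_U c\,v\varphi\,dx$, and since $v\ge 0$, $\varphi\ge 0$ this implies $\cJ(v,\varphi)\ge -\|c^-\|_{L^\infty(U)}\int_U v\varphi\,dx$. That is a \emph{weaker} statement than $\cJ(v,\varphi)\ge 0$: a negative zero-order term makes the inequality easier to satisfy, so you cannot discard it by comparison. Your claim that ``making $c$ more negative only weakens the inequality in the direction we need'' has the implication running the wrong way; in fact handling a possibly negative $c$ is exactly the heart of the difficulty in a nonlocal strong maximum principle, and none of the later steps in your sketch address it. The propagation argument you outline ($\cJ(v,\varphi)\ge 0$, test against bump functions near a near-zero of $v$, push the zero set outward) only has a chance if you have already gotten rid of $c$, which you have not.

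A second, independent gap: you invoke ``$J(x-\bar y)>0$ for \emph{all} $x,y\in H$'' to argue that the zero set spreads in one step to all of $H$. The hypothesis of the proposition is (\ref{sym-Q-J-2-strict}), i.e. $J(x-y)>J(x-\bar y)$ for $x,y\in H$, which makes the \emph{difference} positive but need not make $J(x-\bar y)$ itself positive (indeed the whole point of the companion result Proposition~\ref{hopf-simple2-variant} is to handle compactly supported kernels where $J(x-\bar y)$ vanishes at long range). The proposition does not assume $(J2)$, so long-range positivity of the kernel is not available to you.

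The paper's actual route is different and avoids both issues. Given $x_0\in U$ and a set $M\subset H$ of positive measure with $\delta:=\inf_M v>0$, it builds an explicit antisymmetric barrier $w=f-f\circ Q+a(1_M-1_{Q(M)})$ with $f$ a smooth bump at $x_0$, and computes $\cJ(w,\varphi)\le C_a\int_{U_0}\varphi\,dx$ on a small ball $U_0=B_{2r}(x_0)$, where the coefficient $C_a$ contains the term $-a\inf_{x\in U_0}\int_M (J(x-y)-J(x-\bar y))\,dy$. Here (\ref{sym-Q-J-2-strict}) is used only through the strict positivity of this \emph{difference} integral, so $C_a\to-\infty$ as $a\to\infty$; choosing $a$ large makes $w$ a subsolution with margin $\|c\|_{L^\infty}$. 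Then $\tilde v=v-\frac{\delta}{a}w$ is an antisymmetric supersolution of the same linear problem in $U_0$, and the \emph{small-volume} maximum principle Proposition~\ref{4-elliptic-max1} (with $r$ chosen so that $\Lambda_1(B_{2r}(x_0))>\|c\|_{L^\infty(U)}$, via Lemma~\ref{3-mengen-k}) gives $\tilde v\ge 0$ and hence $v\ge\delta/a>0$ on $B_r(x_0)$. It is precisely this combination of a strict-subsolution barrier and the small-volume estimate that makes the zero-order term harmless, and that is the idea missing from your sketch.
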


\begin{proof}
We assume that $v \not \equiv 0$  in $\R^N$. For given $x_0 \in U$, it then suffices to show that $\underset{B_r(x_0)}\essinf\: v >0$ for $r>0$ sufficiently small. Since $v\not\equiv 0$ in $\R^N$ and $v$ is antisymmetric with $v \ge 0$ in $H$, there exists a bounded set 
$M \subset H$ of positive measure with $x_o \not \in \overline M$ and such that 
\begin{equation}
  \label{eq:def-delta}
\delta:= \inf_{M} v >0.
\end{equation}
By Lemma~\ref{3-mengen-k}, we may fix $0<r< \frac{1}{4}\dist(x_0,[\R^N \setminus H] \cup M)$ such  that $\Lambda_1(B_{2r}(x_0)) > \|c\|_{L^{\infty}(U) }$. Next, we fix a function $f\in \cC^{2}_{c}(\R^{N})$ such that $0 \le f \le 1$ on $\R^N$ and 
\begin{equation*}
f(x):=\left\{\begin{aligned} 1, &&\text{ for $|x-x_0|\leq r$,}\\
0, &&\text{ for $|x-x_0|\geq 2r$.}\\
\end{aligned}\right.
\end{equation*}
Moreover we define 
$$
w: \R^N \to \R, \qquad w(x):=f(x)-f(\bar{x})+ a\bigl[1_M(x)-1_{M} (\bar x))\bigr],
$$
where $a>0$ will be fixed later. We also put $U_0:= B_{2r}(x_0)$ and $U_0':= B_{3r}(x_0) \cup Q(B_{3r}(x_0))$. Note that the function $w$ is antisymmetric and satisfies 
\begin{equation}
  \label{eq:ineq-w}
w\equiv 0 \quad \text{on $H \setminus (U_0 \cup M)$,} \quad \qquad w \equiv  a 
\quad \text{on $M$.}
\end{equation}
 We claim that $w \in \cH(U_0')$. Indeed, by Proposition~\ref{3-prel-dense}(i) 
we have $f - f \circ Q \in \cD(\R^N) \cap L^2(\R^N) \subset \cH(U_0')$, whereas 
$1_M - 1_{Q(M)} \in \cH(U_0')$ since $M$ is bounded and $[M \cup Q(M)] \cap U_0'= \emptyset$. \\
Next, let $\varphi\in \cD(U_0)$, $\varphi\geq0$. 
By Proposition~\ref{3-prel-dense}(ii) we have
\begin{equation}
\cJ(f,\varphi) \leq C \int_{U_0}\varphi(x)\ dx 
\end{equation}
with $C=C(f)>0$ independent of $\phi$.  Since 
$$
f(\bar x) \phi(x) = 1_{M}(x) \phi(x)= 1_{Q(M)}(x) \phi(x) = 0 \qquad \text{for every $x \in \R^N$,}
$$
we have 
\begin{align*}
 \cJ(w,\varphi)&=\cJ(f,\varphi) - \cJ(f \circ Q,\varphi)   +a\bigl[\cJ(1_M, \phi) -\cJ(1_{Q(M)}, \phi)
\bigr]\\
&\leq C \int_{U_0}\varphi(x)\ dx +\int_{U_0}\int_{Q(U_0)} \varphi(x)f(\overline y) J(x-y)\ dydx\\
 & -a\bigl[ \int_{U_0}\int_{M}\varphi(x)J(x-y)\ dydx -\int_{U_0}\int_{Q(M)}\varphi(x)J(x-y)\ dydx \bigr] \\
&\leq \Bigl( C + \sup_{x \in U_0} \int_{Q(U_0)} J(x-y)\,dy \Bigr) 
 \int_{U_0}\varphi(x)\ dx  -a\int_{U_0}\varphi(x) \int_{M}[J(x-y)-J(x-\bar{y})]\ dydx\\
&\le C_a \int_{U_0} \phi(x)\,dx
\end{align*}
with 
$$
C_a := C + \sup_{x \in U_0} \int_{Q(U_0)} J(x-y)\,dy - a \inf_{x \in U_0} \int_M (J(x-y)-J(x-\bar{y}))\ dy\in \R
$$
Since $\overline U_0 \subset H$, (\ref{sym-Q-J-2-strict}) and the continuity of the function $x \mapsto \int_M (J(x-y)-J(x-\bar{y}))\ dy$ on $\overline U_0$ imply that 
\[
\inf_{x \in U_0} \int_M (J(x-y)-J(x,\bar{y}))\ dy>0
\]
Consequently, we may fix $a>0$ sufficiently large such that $C_a \le -\|c\|_{L^\infty(U_0)}$. Since $0 \le w \le 1$ in $U_0$, we then have 
\begin{equation}
\label{eq:w-ineq}
\cJ(w,\varphi)  \le -\|c\|_{L^\infty(U_0)} \int_{U_0}\varphi(x)\,dx \leq \int_{U_0}c(x)w(x)\varphi(x)\ dx.
\end{equation}
We now consider the function $\tilde v:=v-\frac{\delta}{a} w \in \cH(U_0')$, which by (\ref{eq:def-delta}) and (\ref{eq:ineq-w}) satisfies $\tilde v \ge 0$ on $H \setminus U_0$.  Hence, by assumption and (\ref{eq:w-ineq}), $\tilde v$ is an antisymmetric supersolution of the problem  
\begin{equation}\label{linear-prob-special}
I\tilde v = c(x)\tilde v\quad \text{ in $U_0$,}\qquad  \tilde v \equiv 0 \quad \text{on $H \setminus U_0$}
\end{equation}
Since $\|c\|_{L^{\infty}(U_0)}<\Lambda_1(U_0)$, Proposition \ref{4-elliptic-max1} implies that $\tilde v \ge 
0$ a.e. in $U_0$, so that $v \geq \frac{\delta}{a} w = \frac{\delta}{a}  >0$ a.e. in $B_{r}(x_0)$. This ends the proof.
\end{proof}

\section{Proof of the main symmetry result }\label{mr}

In this section we complete the proof of Theorem \ref{sec:goal}.  So throughout this section, we assume that  $J:\R^N  \setminus \{0\} \to[0,\infty)$ is even and satisfies $(J1)$ and $(J2)$, $\Omega \subset \R^N$ satisfies $(D)$ and the nonlinearity $f$ satisfies $(F_1)$ and $(F_2)$. Moreover, we let $u \in L^\infty(\Omega) \cap \cD(\Omega)$ be a nonnegative solution of $(P)$.  For $\lambda \in \R$, we consider the open affine half space 
$$
H_{\lambda}:=\left \{
  \begin{aligned}
&\{x\in\R^{N}\;:\; x_{1}>\lambda\}\qquad \text{if $\lambda \ge  0$;}\\
 &\{x\in\R^{N}\;:\; x_{1}<\lambda\}\qquad \text{if $\lambda <  0$.}\\   
  \end{aligned}
\right.
$$
Moreover, we let $Q_{\lambda}:\R^{N}\to\R^{N}$ denote the reflection at $\partial H_{\lambda}$, i.e. $Q_{\lambda}(x)=(2\lambda-x_{1},x')$. By Remark~\ref{sec:equality-monotonicity}, we may assume without loss of generality that (\ref{eq:adj-measure}) holds. As noted in Remark~\ref{symmetry-need}, $J$ therefore satisfies the symmetry and monotonicity conditions (\ref{sym-Q-J-1}) and (\ref{sym-Q-J-2-strict}) with $H$ replaced by $H_\lambda$ for $\lambda \not=0$.  Let $\ell:=\sup \limits_{x\in \Omega}x_1$. 
Setting $\Omega_{\lambda}:= H_\lambda \cap \Omega$ for $\lambda \in \R$,  we note that $Q_{\lambda}(\Omega_\lambda)\subset \Omega$ for all $\lambda\in (-\ell,\ell)$ and $Q_{0}(\Omega)=\Omega$ as a consequence of assumption 
$(D)$. Then for all $\lambda\in (-\ell,\ell)$,  Remark~\ref{3-anti} implies that $v_{\lambda}:=u \circ Q_\lambda-u \in \cD(\R^N) \cap L^2(\R^N)$ is an antisymmetric supersolution of
the problem 
\begin{equation}\label{linear-prob-lambda}
Iv = c_\lambda (x)v \quad \text{ in $\Omega_\lambda$,}\qquad  v \equiv 0 \quad \text{on $H_\lambda  \setminus \Omega_\lambda$}
\end{equation}
with 
\[
c_\lambda \in L^{\infty}(\Omega_\lambda)\quad \text{given by}\quad  
c_\lambda(x)= \left \{
  \begin{aligned}
  &\frac{f(x,u(Q_\lambda(x)))-f(x,u(x))}{v_\lambda(x)},&&\qquad v_\lambda (x) \not= 0;\\
  &0, &&\qquad v_\lambda(x)= 0.
  \end{aligned}
\right.
\]
Note that, as a consequence of $(F1)$ and since $u \in L^\infty(\Omega)$,  we have 
$$
c_\infty:= \sup_{\lambda \in (-\ell,\ell)}\|c_\lambda\|_{L^\infty(\Omega_\lambda)} < \infty.
$$
We now consider the statement
\[
 (S_{\lambda})\qquad \underset{K}{\essinf} \:v_{\lambda}> 0 \qquad \text{for every compact subset $K \subset \Omega_{\lambda}$.}
\]
Assuming that $u \not\equiv 0$ from now on, we will show $(S_{\lambda})$ for all $\lambda\in (0,\ell)$. 
Since $|\Omega_{\lambda}| \to 0$ as $\lambda\to \ell$,  Lemma \ref{3-mengen-k}  implies that there exists $\epsilon \in (0,\ell)$  such that $\Lambda_1(\Omega_{\lambda})>c_\infty$ for all $\lambda\in [\epsilon,\ell)$. Applying Proposition \ref{4-elliptic-max1} we thus find that 
\begin{equation}
  \label{eq:almostS-lambda}
  v_\lambda \ge 0 \quad \text{a.e. in $H_\lambda\quad$ for all $\lambda \in [\epsilon,\ell)$.}
\end{equation}
We now show\\[0.1cm] 
{\em Claim 1: If $v_\lambda \ge 0$ a.e. in $H_\lambda$ for some $\lambda \in (0,\ell)$, then $(S_{\lambda})$ holds.}\\[0.1cm] 
To prove this, by Proposition~\ref{hopf-simple2} it suffices to show that $v_{\lambda}\not \equiv 0$ in $\R^N$. If, arguing by contradiction, $v_{\lambda}\equiv0$ in $\R^N$, then $\partial H_{\lambda}$ is a symmetry hyperplane of $u$. Since $\lambda \in (0,\ell)$ and $u \equiv 0$ in $\R^N \setminus \Omega$, we then have $u\equiv 0$ in the nonempty set $\Omega_{-\ell+2\lambda}$.  Setting $\lambda'= -\ell+\lambda$, we thus infer that $v_{\lambda'} \equiv 0$ in $\Omega_{\lambda'}$. Consequently, $v_{\lambda'}\equiv 0$ in $\R^{N}$ by Proposition \ref{hopf-simple2}.  Thus $u$ has the two different parallel symmetry hyperplanes $\partial H_{\lambda}$ and $\partial H_{\lambda'}$. Since $u$ vanishes outside a bounded set, this implies that $u\equiv 0$, which is a contradiction. Thus Claim 1 is proved.\\[0.1cm]
Next we show\\[0.1cm]
{\em Claim 2: If $(S_{\lambda})$ holds for some $\lambda \in (0,\ell)$, then there is $\delta\in(0,\lambda)$ such that $(S_{\mu})$ holds for all $\mu \in (\lambda-\delta,\lambda)$.}\\[0.1cm]
To prove this, suppose that $(S_{\lambda})$ holds for some $\lambda \in (0,\ell)$. Using Lemma~\ref{3-mengen-k}, we fix $s \in (0, |\Omega_\lambda|)$ such that $\Lambda_1(s)> c_\infty$, which implies that $\Lambda_1(U)>c_\infty$ for all open sets $U\subset\R^N$ with $|U|\leq s$.  
Since $\Omega$ is bounded,  we may also fix $\delta_0 >0 $ such that 
$$
|\Omega_{\mu} \setminus \Omega_{\mu+\delta_0}|<s/2\qquad \text{for all $\mu \ge 0$.}
$$
By Lusin's Theorem, there exists a compact subset $K \subset \Omega$ such that $|\Omega \setminus K|< s/4$ and such that the restriction 
$u|_{K}$ is continuous. For $\mu \ge 0$, we now consider the compact set
$$
K_\mu:= \overline \Omega_{\mu+\delta_0} \cap K \cap Q_\mu(K) \:\subset \:K \cap \Omega_\mu $$
and the open set $U_\mu:=\Omega_\mu \setminus K_\mu$. Note that  
\begin{equation}
  \label{eq:additional-1}
 |U_\mu| \le |\Omega_\mu \setminus \Omega_{\mu+\delta_0}|+ |\Omega_\mu \setminus K| + 
|\Omega_\mu \setminus Q_\mu(K)| \le \frac{s}{2} + 2 |\Omega \setminus K| < s \qquad \text{for $\mu \ge 0$. }
\end{equation}
As a consequence, for $0 \le \mu \le \lambda$ we have $|K_\mu| > |\Omega_\mu|-s \ge |\Omega_\lambda|-s>0$  and thus $K_\mu \not = \varnothing$. Property $(S_\lambda)$ and the continuity of $u|_K$ imply that  $\min \limits_{K_\lambda} v_\lambda>0$. Thus, again by the continuity of $u|_K$, there exists $\delta \in (0,\min \{\lambda,\delta_0\})$ such that 
 $$
\min_{K_\mu} v_\mu >0 \qquad \text{for all $\mu \in[\lambda-\delta,\lambda]$.}
$$
Consequently, for $\mu \in (\lambda-\delta,\lambda)$, the function $v_{\mu}$ is an antisymmetric supersolution of the problem 
$$
Iv=c_{\mu}(x)v \quad \text{in $U_\mu$,} \qquad v \equiv 0 \quad \text{on $H_{\mu}  \setminus U_\mu$, }
$$
whereas $\Lambda_1(U_\mu)>c_\infty$ by (\ref{eq:additional-1}) and the choice of $s$. Hence $v_{\mu}\geq 0$ in $H_{\mu}$ by Proposition \ref{4-elliptic-max1}, and thus $(S_{\mu})$ holds by Claim 1. This proves Claim 2.\\[0.1cm]
To finish the proof, we consider 
$$
\lambda_{0}:=\inf\{\tilde{\lambda} \in (0,\ell) \;:\; \text{ $(S_{\lambda})$ holds for all $\lambda \in (\tilde{\lambda},\ell) $}\} \quad \in \;[0,\ell).
$$ 
We then have $v_{\lambda_0} \ge 0$ in $H_{\lambda_0}$. Hence Claim 1 and Claim 2 imply that $\lambda_{0}=0$. Since the procedure can be repeated in the same way starting from $-\ell$,  we find that $v_{0} \equiv 0$.  Hence the function $u$ has the asserted symmetry and monotonicity properties.\\
It remains to show (\ref{eq:positivity-thm-1-1}). 
So let $K \subset \Omega$ be compact. Replacing $K$ by $K \cup Q_0(K)$ if necessary, we may assume that $K$ is symmetric with respect to $Q_0$. Let $K':= \{x \in K\::\: x_1 \le 0\}$. Since for $\lambda>0$ sufficiently small $Q_\lambda(K')$ is a compact subset of $\Omega_\lambda$, the property $(S_\lambda)$ and the symmetry of $u$ then imply that 
$$
\underset{K}{\essinf}\, u =  \underset{K'}{\essinf}\, u  \ge \underset{Q_\lambda(K')}\essinf\, v_\lambda >0,
$$ 
as claimed in (\ref{eq:positivity-thm-1-1}).

\section{Proof of a variant symmetry result}
\label{sec:vari-symm-result}

In this section we prove Theorem~\ref{sec:vari-symm-result-1}, which is concerned with the class of even kernel functions  satisfying $(J2)'$ in place of $(J2)$.  Throughout this section, we consider a symmetric kernel $J: \R^N \setminus \{0\} \to [0,\infty)$ satisfying $(J1)$.  We fix an open affine half space $H \subset \R^N$, and we consider the notation of Section~\ref{mp}. Moreover, we assume the symmetry and monotonicity assumptions (\ref{sym-Q-J-1}) and (\ref{sym-Q-J-2}), so that Lemma~\ref{sec:linear-problem-tech} and Proposition~\ref{4-elliptic-max1} are available. In order to derive a variant of the strong maximum principle given in Proposition~\ref{hopf-simple2}, we introduce the following strict monotonicity condition: 
\begin{equation}
\text{There exists $r_0>0$ such that $J(x-y) > J(x- \bar y)$ for all $x,y \in H$ with $|x-y| \le r_0$}  \label{sym-Q-J-2-strict-loc}
 \end{equation}
We then have the following.

\begin{prop}\label{hopf-simple2-variant}
Assume that $J$ satisfies $(J1)$, (\ref{sym-Q-J-1}), (\ref{sym-Q-J-2}) and (\ref{sym-Q-J-2-strict-loc}). Moreover, let $U \subset H$ be  a subdomain and $c \in L^\infty(U)$. 
Furthermore, let  $v$  be an antisymmetric supersolution of (\ref{linear-prob})  
such that $v\geq0$ a.e. in $H$.\\
Then either $v \equiv 0$ a.e. in a neighborhood of $\overline U$, or 
$$
\underset{K}\essinf\: v >0 \qquad \text{for every compact subset $K  \subset U$.}
$$
\end{prop}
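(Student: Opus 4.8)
The plan is to run the argument of the proof of Proposition~\ref{hopf-simple2} with the global strict monotonicity (\ref{sym-Q-J-2-strict}) replaced by the local one (\ref{sym-Q-J-2-strict-loc}), and to compensate for the loss of long‑range interaction by a chaining argument that uses that $U$ is connected. The first step I would carry out is a \emph{local positivity lemma}: under the hypotheses of the proposition, if $x_0\in U$ and there is a bounded measurable set $M\subset H$ of positive measure with $\dist(x_0,M)>0$, $\sup_{y\in M}|y-x_0|<r_0$, and $\essinf_M v>0$, then $\essinf_{B_{r'}(x_0)}v>0$ for some $r'>0$. Its proof repeats the proof of Proposition~\ref{hopf-simple2} almost verbatim: one builds $w=f-f\circ Q+a[1_M-1_{Q(M)}]$ on $U_0:=B_{2r'}(x_0)$, verifies $\cJ(w,\varphi)\le\int_{U_0}c\,w\,\varphi$ for $0\le\varphi\in\cD(U_0)$, and applies Proposition~\ref{4-elliptic-max1} to the antisymmetric supersolution $\tilde v:=v-\frac{\essinf_M v}{a}\,w$ of (\ref{linear-prob}) on $U_0$. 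The only change is that, besides the smallness requirements on $r'$ already present in Proposition~\ref{hopf-simple2} (in particular $\overline{U_0}\subset H$ and $\Lambda_1(U_0)>\|c\|_{L^\infty(U)}$, via Lemma~\ref{3-mengen-k}), one must also take $r'$ small enough that $B_{2r'}(x_0)\subset U$ --- so that the supersolution property of $v$ in $U$ is available on $U_0$ --- and that $|x-y|\le r_0$ for all $x\in\overline{U_0}$, $y\in M$; the latter is possible since $\sup_{y\in M}|y-x_0|<r_0$, and it is precisely what lets (\ref{sym-Q-J-2-strict-loc}) be applied to get $\inf_{x\in U_0}\int_M\bigl(J(x-y)-J(x-\bar y)\bigr)\,dy>0$, which is the sole place where strictness of the kernel is used.

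Next I would split into two cases. \emph{Case 1: $v\equiv0$ a.e. in $U$.} Testing the supersolution inequality with $0\le\varphi\in\cC^{0,1}_c(U)\subset\cD(U)$, using $v\equiv0$ on $U$ and Lemma~\ref{sec:linear-problem-tech-1} to justify Fubini, one gets $\int_U\varphi\,g\le0$ for all such $\varphi$, where $g(x):=\int_{\R^N\setminus U}v(y)J(x-y)\,dy$; hence $g\le0$ a.e. in $U$. Substituting $y\mapsto\bar y$ on $\R^N\setminus H$ and using antisymmetry of $v$ together with $v\equiv0$ on $U$ rewrites $g(x)=\int_{H\setminus U}v(y)\bigl(J(x-y)-J(x-\bar y)\bigr)\,dy\ge0$ a.e. in $U$, by (\ref{sym-Q-J-2}) and $v\ge0$ on $H\setminus U$. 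Thus $g=0$ a.e. in $U$, and since its integrand is nonnegative, $v(y)\bigl(J(x-y)-J(x-\bar y)\bigr)=0$ for a.e. $(x,y)\in U\times(H\setminus U)$. By (\ref{sym-Q-J-2-strict-loc}) the bracket is strictly positive whenever $|x-y|\le r_0$, so a Fubini argument gives $v=0$ a.e. on $(H\setminus U)\cap B_{r_0}(U)$, hence $v=0$ a.e. on $H\cap B_{r_0}(U)$ and, by antisymmetry, on $(\R^N\setminus\overline H)\cap B_{r_0}(Q(U))$. Up to the null set $\partial H$ the union of these two sets contains an open neighborhood of $\overline U$, so $v\equiv0$ a.e. in a neighborhood of $\overline U$ --- the first alternative.

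\emph{Case 2: $|\{v>0\}\cap U|>0$.} Let $Z:=\{x_0\in U:\essinf_{B_r(x_0)}v>0\ \text{for some }r>0\}$, which is open. First, $Z\neq\varnothing$: starting from a Lebesgue density point $x_0\in U$ of $\{v>0\}$ and choosing $\rho>0$ small enough that $B_{2\rho}(x_0)\subset U$, $B_\rho(x_0)\subset H$ and $\rho<r_0/4$, one finds $n,k\in\N$ with $|\{v>1/n\}\cap(B_{\rho/2^{k-1}}(x_0)\setminus B_{\rho/2^k}(x_0))|>0$; taking $M$ equal to this set and invoking the local positivity lemma gives $x_0\in Z$. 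Second, $Z$ is relatively closed in $U$: if $x_0\in U\cap\overline Z\setminus Z$, pick $x_1\in Z$ with $0<|x_1-x_0|<r_0/4$ and $r>0$ with $\essinf_{B_r(x_1)}v>0$; then $M:=B_{\min(r,|x_1-x_0|/2)}(x_1)$ --- which lies in $H$, since $v>0$ a.e. on it while $v\le0$ a.e. on $\R^N\setminus H$ --- satisfies the hypotheses of the local positivity lemma at $x_0$, forcing $x_0\in Z$, a contradiction. As $U$ is a subdomain, $Z$ being nonempty, open and relatively closed forces $Z=U$. Finally, a compact $K\subset U$ is covered by finitely many balls $B_{r_{x_i}/2}(x_i)$ with $x_i\in Z$ and $\essinf_{B_{r_{x_i}}(x_i)}v>0$, whence $\essinf_K v\ge\min_i\essinf_{B_{r_{x_i}}(x_i)}v>0$ --- the second alternative.

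The step I expect to be the main obstacle is the local positivity lemma, i.e. ensuring that the barrier construction of Proposition~\ref{hopf-simple2} still yields a strictly positive gain once the kernel is strictly monotone only at scales $\le r_0$; this confines the auxiliary set $M$ to a neighborhood of $x_0$ and thereby destroys the long‑range mechanism, which is why positivity has to be propagated through $U$ step by step via connectedness in Case 2. A secondary technical point is Case 1, where the nonlocal interaction and (\ref{sym-Q-J-2-strict-loc}) near $\partial U$ are needed to upgrade vanishing of $v$ in $U$ to vanishing in a whole neighborhood of $\overline U$.
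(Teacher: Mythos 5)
Your proof is correct, and its core --- the barrier construction yielding a local positivity lemma, followed by an open-and-closed argument on the connected set $U$ --- is the same as the paper's. Your local positivity lemma coincides with the Claim~(\ref{claim-1}) in the paper's proof; you explicitly add the requirement $B_{2r'}(x_0)\subset U$, which is needed so that $\cD(U_0)\subset\cD(U)$ and hence the supersolution property of $v$ can be transferred to $U_0$ before invoking Proposition~\ref{4-elliptic-max1}; the paper uses this implicitly (it is always achievable since $U$ is open). Your Case~2 is the same connectedness argument as the paper's, with $Z$ playing the role of the paper's set $W$. The genuine difference is in how the alternative ``$v\equiv 0$ a.e. in a neighbourhood of $\overline U$'' is reached. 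The paper sets up the dichotomy on the enlarged set $\{x\in H:\dist(x,U)<r_0/2\}$: if $v\not\equiv0$ there, then $W\neq\varnothing$, hence $W=U$; otherwise $v\equiv 0$ there, and antisymmetry of $v$ upgrades this to a full neighbourhood of $\overline U$. You instead split on $U$ itself, so that the vanishing case ($v\equiv0$ a.e. in $U$) requires an additional argument. Your Case~1 --- testing the supersolution inequality with $\varphi\in\cC^{0,1}_c(U)$, using Lemma~\ref{sec:linear-problem-tech-1} to justify the rearrangement, rewriting $g(x)=\int_{H\setminus U}v(y)[J(x-y)-J(x-\bar y)]\,dy$, observing $g\le0$ and $g\ge0$ a.e.\ on $U$, and concluding via (\ref{sym-Q-J-2-strict-loc}) and Fubini that $v=0$ a.e.\ on $H\cap B_{r_0}(U)$ --- is a valid and self-contained route to this conclusion, and in fact gives a slightly more explicit quantitative radius for the vanishing neighbourhood. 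It is, however, avoidable: once one knows $W$ is open and closed and that $v\not\equiv0$ in a slightly larger set implies $W\neq\varnothing$, the vanishing case follows by contraposition without any extra computation, which is what the paper exploits. So: same key mechanism, with an extra (correct but unnecessary) argument on the vanishing side, and one bookkeeping detail that you made explicit which the paper glosses over.
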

We stress that, in contrast to Proposition~\ref{hopf-simple2}, we require connectedness of $U$ here. 

\begin{proof}
Let $W$ denote the set of points $y \in U$ such that $\underset{B_{r}(y)}\essinf\: v >0$ for $r>0$ sufficiently small, and let $r_0>0$ be as in (\ref{sym-Q-J-2-strict-loc}). We claim the following.\\
\begin{equation}
  \label{claim-1}
\text{If $x_0 \in U$ is such that $v \not \equiv 0$ in $B_{\frac{r_0}{2}}(x_0)$, then $x_0 \in W$.}
\end{equation}
To prove this, let $x_0 \in U$ be such that $v \not \equiv 0$ in $B_{\frac{r_0}{2}}(x_0)$. Then there exists a bounded set 
$M \subset H \cap B_{\frac{r_0}{2}}(x_0)$ of positive measure with $x_0 \not \in \overline M$ and such that 
\begin{equation}
\delta:= \inf_{M} v >0 
\end{equation}
By Lemma~\ref{3-mengen-k}, we may fix $0<r< \frac{1}{4}\min \{r_0\,,\, \dist(x_0,[\R^N \setminus H] \cup M)\}$ such  that $\Lambda_1(B_{2r}(x_0)) > \|c\|_{L^{\infty}(U) }$. Next, we put $U_0:= B_{2r}(x_0)$ and $U_0':= B_{3r}(x_0) \cup Q(B_{3r}(x_0))$. Moreover, we define the functions $f\in C^{2}_{c}(\R^{N})$ and $w \in \cH(U_0')$, depending on $a>0$, as in the proof of Proposition~\ref{hopf-simple2}. 
As noted there,  $w$ is antisymmetric and satisfies 
\begin{equation}
  \label{eq:ineq-w2}
w\equiv 0 \quad \text{on $H \setminus (U_0 \cup M)$,} \quad \qquad w \equiv  a 
\quad \text{on $M$.}
\end{equation}
As in the proof of Proposition~\ref{hopf-simple2}, we also see that 
$$
 \cJ(w,\varphi) \le C_a \int_{U_0} \phi(x)\,dx  
\qquad \text{for all $\varphi \in \cD(U_0), \phi \ge 0$}
$$
with 
$$
C_a := C + \sup_{x \in U_0} \int_{Q(U_0)} J(x-y)\,dy - a \inf_{x \in \overline U_0} \int_M (J(x-y)-J(x-\bar{y}))\ dy
$$
Since $\overline U_0 \subset H \cap B_{\frac{r_0}{2}}(x_0) $ and $M \subset H \cap B_{\frac{r_0}{2}}(x_0)$, (\ref{sym-Q-J-2-strict-loc}) and the continuity of the function $x \mapsto \int_M (J(x-y)-J(x-\bar{y}))\ dy$ on $\overline U_0$ imply that 
\[
\inf_{x \in \overline U_0} \int_M (J(x-y)-J(x,\bar{y}))\ dy>0
\]
Hence we may proceed precisely as in the proof of Proposition~\ref{hopf-simple2} to prove that $v \geq  \frac{\delta}{a}  >0$ a.e. in $B_{r}(x_0)$ for $a>0$ sufficiently large, so that $x_0 \in W$. Hence (\ref{claim-1}) is true.\\
From (\ref{claim-1}) it immediately follows that $W$ is both open and closed in $U$. Moreover, if $v \not \equiv 0$ in $\{x \in H\::\: \dist(x,U) < \frac{r_0}{2}\}$, then $W$ is nonempty and therefore $W=U$ by the connectedness of $U$. This ends the proof.
\end{proof}

Next we complete the proof of Theorem~\ref{sec:vari-symm-result-1}.   So throughout the remainder of this section, we assume that  $J:\R^N  \setminus \{0\} \to[0,\infty)$ is even and satisfies $(J1)$ and $(J2)'$, $\Omega \subset \R^N$ satisfies $(D)$ and the nonlinearity $f$ satisfies $(F_1)$ and $(F_2)$.  Moreover, we let $u \in L^\infty(\Omega) \cap \cD(\Omega)$ denote an a.e. positive solution of $(P)$. For $\lambda \in \R$, we let $H_\lambda$, $Q_\lambda$, $\Omega_\lambda$, $c_\lambda$ and $v_\lambda$ 
be defined as in Section~\ref{mr}, and again we put $\ell:=\sup \limits_{x\in \Omega}x_1$. As a consequence of $(J1)$ and $(J2)'$, we may assume that  $J$ satisfies~(\ref{sym-Q-J-1})~(\ref{sym-Q-J-2}) and (\ref{sym-Q-J-2-strict-loc}) with $H$ replaced by $H_\lambda$ for $\lambda \not=0$ (the argument of Remark~\ref{symmetry-need} still applies). As in Section~\ref{mr}, we then consider the statement
\[
 (S_{\lambda})\qquad \underset{K}{\essinf} \:v_{\lambda}> 0 \qquad \text{for every compact subset $K \subset \Omega_{\lambda}$.}
\]
We wish to show $(S_{\lambda})$ for all $\lambda\in (0,\ell)$. As in Section~\ref{mr}, we find $\epsilon \in (0,\ell)$  such that 
\begin{equation}
  \label{eq:almostS-lambda-variant}
  v_\lambda \ge 0 \quad \text{a.e. in $H_\lambda\quad$ for all $\lambda \in [\epsilon,\ell)$.}
\end{equation}
We now show\\[0.1cm] 
{\em Claim 1: If $v_{\lambda} \ge 0$ a.e. in $H_{\lambda}$ for some $\lambda \in (0,\ell)$, then $(S_{\lambda})$ holds.}\\[0.1cm] 
To prove this, we argue by contradiction. If $(S_\lambda)$ does not hold, then, by Proposition~\ref{hopf-simple2-variant}, there exists a connected component $\Omega'$ of $\Omega_\lambda$ and a  neighborhood $N$ of $\overline{\Omega'}$ such that $v_{\lambda}\equiv 0$ in $N$. However, since $\lambda \in (0,\ell)$, the set $\tilde N:= Q_\lambda(N \setminus \Omega) \cap \Omega$ has positive measure and  $v_\lambda \equiv 0$ in $\tilde N$ by the antisymmetry of $v_\lambda$. However, $v \equiv -u$ on $\tilde N$, so $u \equiv 0$ a.e. on $\tilde N$, contrary to the assumption that $u >0$ a.e. in $\Omega$. Thus Claim 1 is proved.\\[0.1cm]
Precisely as in Section~\ref{mr} we may now show\\[0.1cm] 
{\em Claim 2: If $(S_{\lambda})$ holds for some $\lambda \in (0,\ell)$, then there is $\delta\in(0,\lambda)$ such that $(S_{\mu})$ holds for all $\mu \in (\lambda-\delta,\lambda)$.}\\[0.1cm]
Moreover, based on (\ref{eq:almostS-lambda-variant}), Claim 1 and Claim 2, we may now finish the proof of Theorem~\ref{sec:vari-symm-result-1} precisely as in the end of Section~\ref{mr}.

\bibliographystyle{amsplain}

\end{document}